\definecolor{MyDarkBlue}{rgb}{0,0.08,0.50}
\definecolor{BrickRed}{rgb}{0.65,0.08,0}
\renewcommand{\d}[1]{\ensuremath{\operatorname{d}\!{#1}}}
\newcommand{\e}[1]{ {\mathrm{e}}^{ #1 } }
\newcommand{\im}{\mathrm{i}}
\newcommand{\indicator}[1]{ \mathds{1} [ #1 ] }
\newcommand{\process}[2]{ \{ #1 \}_{ #2 } }
\newcommand{\smallO}[1]{ o(#1) }
\newcommand{\bigO}[1]{ O(#1) }
\newcommand{\bigObig}[1]{ O\Bigl(#1\Bigr) }
\newcommand{\naturalNumbersPlus}{ \mathbb{N}_{+} }
\newcommand{\naturalNumbersZero}{ \mathbb{N}_{0} }
\newcommand{\realNumbers}{ \mathbb{R} }
\newcommand{\criticalpoint}[1]{  #1^{\textnormal{opt}} }
\newcommand{\normalizationConstant}{Z}
\newcommand{\refFigure}[1]{{\textrm{Figure~\ref{#1}}}}
\newcommand{\refTable}[1]{{\textrm{Table~\ref{#1}}}}
\newcommand{\refEquation}[1]{{\textrm{\eqref{#1}}}}
\newcommand{\refTheorem}[1]{{\textrm{Theorem~\ref{#1}}}}
\newcommand{\refCorollary}[1]{{\textrm{Corollary~\ref{#1}}}}
\newcommand{\refLemma}[1]{{\textrm{Lemma~\ref{#1}}}}
\newcommand{\refSection}[1]{{\textrm{\S\ref{#1}}}}
\newcommand{\refAppendixSection}[1]{{\textrm{\ref{#1}}}}
\newcommand{\QuodEratDemonstrandum}{\hfill \ensuremath{\Box}}
\newtheorem{theorem}{Theorem}
\newtheorem{lemma}{Lemma}
\newtheorem{corollary}{Corollary}
\begin{document}

\title{Optimality gaps in asymptotic dimensioning of many-server systems}

\date{}

\author{Jaron Sanders\thanks{jaron.sanders@tue.nl}}

\author{S.C.\ Borst\thanks{s.c.borst@tue.nl}}

\author{A.J.E.M.\ Janssen\thanks{a.j.e.m.janssen@tue.nl}}

\author{J.S.H.\ van Leeuwaarden\thanks{j.s.h.v.leeuwaarden@tue.nl}}


\affil{Department of Mathematics \& Computer Science, Eindhoven University of Technology, P.O.\ Box 513, 5600 MB Eindhoven, The Netherlands}

\newacronym{QED}{QED}{Quality-and-Efficiency-Driven}
\newacronym{OU}{OU}{Ornstein-Uhlenbeck}
\newacronym{EM}{EM}{Euler-Maclaurin}

\maketitle

\begin{abstract}
The \gls{QED} regime provides a basis for solving asymptotic dimensioning problems that trade off revenue, costs and service quality. We derive bounds for the \emph{optimality gaps} that capture the differences between the true optimum and the asymptotic optimum based on the QED approximations. Our bounds generalize earlier results for classical many-server systems. We also apply our bounds to a many-server system with threshold control.
\vspace{1em}

\noindent \textit{Keywords:} 
\gls{QED} regime, Halfin--Whitt regime, queues in heavy traffic, asymptotic analysis, asymptotic dimensioning, optimality gap

\end{abstract}

\glsreset{QED}
\section{Introduction}

The theory of square-root staffing in many-server systems ranks among the most celebrated principles in applied probability. The general idea behind square-root staffing is as follows: a finite server system is modeled as a system in heavy traffic, where the number of servers $s$ is large, whereas at the same time, the system is critically loaded. Under Markovian assumptions, and denoting the load on the system by $\lambda$, this can be achieved by setting $s = \lambda +\beta \sqrt{\lambda}$ and letting $\lambda \to \infty$ while keeping $\beta > 0$ fixed, or alternatively setting $\lambda = s - \gamma \sqrt{s}$ and letting $s \to \infty$ while keeping $\gamma > 0$ fixed. In both cases, the system reaches the desirable {\gls{QED} regime}.

The \gls{QED} regime refers to mathematically defined conditions in which both customers and system operators benefit from the advantages that come with systems that operate efficiently at large scale, which is particularly relevant for systems in e.g.\ health care, cloud computing, and customer services. Such conditions manifest themselves in a low delay probability and negligible mean delay, despite the fact that the system utilization is high. Properties of this sort can be proven rigorously for systems such as the $M/M/s$ queue by establishing stochastic-process limits under the aforementioned \gls{QED} scalings \cite{halfin_heavy-traffic_1981}. The \gls{QED} regime also creates a natural environment for solving dimensioning problems that achieve an acceptable trade-off between service quality and capacity. Quality is usually formulated in terms of some target service level. Take for instance the probability that an arriving customer experiences delay. The target could be to keep the delay probability below some value $\epsilon \in (0,1)$. The smaller $\epsilon$, the better the offered quality of service. Once the target service level is set, the objective from the operator's perspective is to determine the highest load $\lambda$ such that the target $\epsilon$ is still met.

For the $M/M/s$ queue, it was shown by Borst et al.\ \cite{borst_dimensioning_2004} that such dimensioning procedures combined with \gls{QED} approximations have certain asymptotic optimality properties. To illustrate this, consider the case of linear costs, i.e.\ waiting cost are $b$ per customer per unit time, and staffing cost are $c$ per server per unit time. Denoting the total cost by $K_\lambda(s)$, it can be shown that when $s = \lambda + \beta \sqrt{\lambda}$ and $\beta > 0$,
\begin{equation}
K_\lambda(s)
= b \lambda \frac{ C_\lambda(s) }{ s - \lambda } + c s
= c \lambda + \sqrt{\lambda} \Bigl( c \beta + \frac{b}{\beta} C_\lambda(s) \Bigr)
\label{eqn:Erlang_C_cost_structure}
\end{equation}
with $C_\lambda(s)$ the delay probability in the $M/M/s$ queue. The first term $c \lambda$ represents the cost of the minimally required capacity $\lambda$, while the second term gathers the cost factors that are all $\bigO{ \sqrt{\lambda} }$. Halfin and Whitt \cite{halfin_heavy-traffic_1981} showed that in the \gls{QED} regime $C_\lambda(s)$ converges to a nondegenerate limit $C_0(\beta)\in (0,1)$, so that in the \gls{QED} regime one only needs to determine $\beta_0 = \arg \min_{\beta} \{ c \beta + b C_0(\beta) / \beta \}$, and then set $s_0 = [ \lambda + \beta_0 \sqrt{\lambda} ]$ as an approximation for the optimal number of servers $\criticalpoint{s} = \arg \min_{s} \{ K_\lambda(s) \}$. Borst et al.\ \cite{borst_dimensioning_2004} called this procedure \textit{asymptotic dimensioning}.

Based on the \gls{QED} limiting regime, one expects that such approximate solutions are accurate for large relative loads $\lambda$. For the \emph{optimality gaps} $| s_0 - \criticalpoint{s} |$ and $| K_\lambda(s_0) - K_\lambda(\criticalpoint{s}) |$, i.e.\ inaccuracies that arise from the fact that the actual system is of finite size, Borst et al.\ \cite{borst_dimensioning_2004} showed through numerical experiments that the approximation $s_0$ performs exceptionally well in almost all circumstances, even when systems are only moderately sized. A rigorous underpinning for these observations was provided by Janssen et al.\ \cite{janssen_refining_2011}, who used \emph{refined} \gls{QED} approximations to quantify the optimality gaps. The delay probability, for instance, was shown to behave as $C_0(\beta) + C_1(\beta) / \sqrt{\lambda} + \bigO{ \lambda^{-1} }$, which in turn was used to estimate the optimality gaps for the dimensioning problem in \refEquation{eqn:Erlang_C_cost_structure}. Zhang et al.\ \cite{zhang_staffing_2012} obtained similar results for optimality gaps in the context of the $M/M/s+M$ queue, in which customers may abandon before receiving service.

Motivated by the results in \cite{janssen_refining_2011,zhang_staffing_2012}, Randhawa \cite{randhawa_optimality_2012} took a more abstract approach to quantify optimality gaps of asymptotic dimensioning problems. He showed under general assumptions that when the approximation to the objective function is accurate up to $\bigO{1}$, the prescriptions that are derived from this approximation are $\smallO{1}$-optimal. The optimality gap thus becomes zero asymptotically. This general setup was shown in \cite{randhawa_optimality_2012} to apply to the $M/M/s$ queues in the \gls{QED} regime, which confirmed and sharpened the results on the optimality gaps in \cite{janssen_refining_2011,zhang_staffing_2012} by implying that $| K_\lambda(s_0) - K_\lambda(\criticalpoint{s}) | = \smallO{1}$. The abstract framework in \cite{randhawa_optimality_2012}, however, can only be applied if refined approximations as in \cite{janssen_refining_2011,zhang_staffing_2012} are available.

Such refined approximations were recently developed in \cite{janssen_scaled_2013,sanders_optimal_2014} for a broad class of many-server systems operating in the \gls{QED} regime with $\lambda = s - \gamma \sqrt{s}$, and equipped with an admission control policy and a revenue structure. For a wide range of performance metrics, $M_s(\gamma)$ say, these refinements are of the form $M_s(\gamma) = M_0(\gamma) + M_1(\gamma) / \sqrt{s} + \cdots$. The method in \cite{janssen_scaled_2013,sanders_optimal_2014} can deliver as many higher-order terms as needed, and generate all the refinements obtained in \cite{janssen_refining_2011,zhang_staffing_2012,randhawa_optimality_2012}.

In the present paper, we demonstrate how the results in \cite{janssen_scaled_2013,sanders_optimal_2014} can be leveraged to determine the optimality gaps of novel asymptotic dimensioning problems for a large class of many-server systems. Our main result (\refTheorem{thm:Objective_gap}) provides generic bounds for the optimality gaps that become sharper when more terms in the \gls{QED} expansion for $M_s(\gamma)$ are included.

\section{Model description}
\label{sec:Preliminaries}

\subsection{Service systems with admission control and revenues}
\label{sec:Service_systems_with_admission_control_and_revenues}

We consider many-server systems with $s$ parallel servers, to which customers arrive according to a Poisson process with rate $\lambda$. Every customer requires an exponentially distributed service time with mean one. If a customer arrives and finds $k-s \geq 0$ customers waiting, the customer is allowed to join the queue with probability $p_s(k-s)$ and is rejected with probability $1 - p_s(k-s)$. The total number of customers in the system evolves as a birth--death process $\process{ X_s(t) }{t \geq 0}$ and has a stationary distribution
\begin{equation}
\pi_s(k)
=
\begin{cases}
\normalizationConstant^{-1}, & k = 0, \\ 
\normalizationConstant^{-1} \frac{(s\rho)^k}{k!}, & k = 1, 2, \ldots, s, \\
\normalizationConstant^{-1} \frac{s^s \rho^k}{s!} \prod_{i=0}^{k-s-1} p_s(i), & k = s+1, s+2, \ldots, \\
\end{cases}
\label{eqn:Equilibrium_distribution}
\end{equation}
where $\rho = \lambda / s$, $\normalizationConstant = \sum_{k=0}^s (s\rho)^k / k! + ( (s\rho)^s / s! ) F_s(\rho)$, and $F_s(\rho) = \sum_{n=0}^\infty p_s(0) \cdot \ldots \cdot p_s(n) \rho^{n+1}$. The stationary distribution in \refEquation{eqn:Equilibrium_distribution} exists if and only if the relative load $\rho$ and the \emph{admission control policy} $\{ p_s(k) \}_{k \in \naturalNumbersZero}$ are such that $F_s(\rho) < \infty$.

Next, we assume that the system generates revenue at rate $r_s(k) \in \realNumbers$ when there are $k$ customers in the system. The sequence $\{ r_s(k) \}_{k \in \naturalNumbersZero}$ will be called the \emph{revenue structure}. The stationary rate at which the system generates revenue is then given by
\begin{equation}
R_s(\gamma)
= \sum_{k=0}^\infty r_s(k) \pi_s(k),
\label{eqn:Weight_function_Rs}
\end{equation}
which depends via the equilibrium distribution on the admission control policy. Ref.\ \cite{sanders_optimal_2014} discusses the problem of maximizing the revenue rate over the set of all admission control policies.

One advantage of considering general admission control policies and revenue structures is that one can study different service systems and steady-state performance measures through one unifying framework. For example, the equilibrium behavior of the canonical $M/M/s/s$, $M/M/s$, and $M/M/s+M$ systems can be recovered by choosing $p_s(k-s) = 0$, $p_s(k-s) = 1$, and $p_s(k-s) = 1/(1 + (k-s+1)\theta/s)$, respectively. Here, $\theta$ corresponds to the rate at which waiting customers abandon from the $M/M/s+M$ system. Similarly, the delay probability $D_s(\gamma) = \sum_{k=s}^\infty \pi_s(k)$ can be recovered by setting $r_s(k) = \indicator{ k \geq s }$, the mean queue length $Q_s(\gamma) = \sum_{k=s}^\infty (k-s) \pi_s(k)$ is recovered when considering $r_s(k) = (k-s) \indicator{ k \geq s }$, and the average number of idle servers $I_s(\gamma) = \sum_{k=0}^{s-1} (s-k) \pi_s(k)$ follows from $r_s(k) = (s-k) \indicator{ k < s }$. 

As a primary example we will consider a scenario where besides the
waiting cost $b > 0$ incurred per customer per unit time, a fee $a > 0$ is
received for every served customer, and a penalty $d \geq 0$ is
imposed for rejecting a customer.
The latter cost accounts for the degree of revenue loss from the
admission control policy. Denoting by $D_s^R(\gamma) = \sum_{k=s}^\infty (1 - p_s(k-s)) \pi_s(k)$ the probability that an arriving
customer is rejected, and by $W_s(\gamma) = \sum_{k=s}^\infty ( ( k - s +1 ) / s ) p_s(k-s) \pi_s(k)$ the expected waiting time of an arriving customer, the total system revenue rate is given by
\begin{align}
&
R_s(\gamma)
= a \lambda ( 1 - D_s^R(\gamma) ) - b \lambda W_s(\gamma) - d \lambda D_s^R(\gamma).
\label{eqn:Economics_optimization_problem} 
\end{align}
By virtue of Little's law $\lambda W_s(\gamma) = Q_s(\gamma)$ and $\lambda (1 -  D_s^R(\gamma)) = s - I_s(\gamma)$, and since $\lambda = s - \gamma \sqrt{s}$, the revenue rate can equivalently be expressed as
\begin{align}
R_s(\gamma)
&
= a s + d \gamma \sqrt{s} - (a+d) I_s(\gamma) - b Q_s(\gamma).
\label{eqn:Rs_from_intuitive_derivation}
\end{align}
This scenario therefore corresponds to the revenue structure
\begin{equation}
r_s(k)
= 
\begin{cases}
a k + d \gamma \sqrt{s} - d (s-k) & k < s, \\
a s + d \gamma \sqrt{s} - b (k-s), & k \geq s. \\
\end{cases}
\label{eqn:Revenue_structure_of_the_economics_optimization_problem}
\end{equation}

\subsection{\texorpdfstring{\gls{QED}}{QED} scaling and refinements}
\label{sec:Critical_scaling_and_QED_refinements}


We now discuss how to apply the
\gls{QED} scaling to obtain an asymptotic expansion for $R_s(\gamma)$ for general revenue structures $\{ r_s(k) \}_{k \in \naturalNumbersZero}$, which we will exploit in \refSection{sec:Generalized_square_root_staffing} to
characterize the asymptotic optimality gap.
We impose the following three conditions throughout this paper:
\begin{enumerate}
\item[(i)] The arrival rate and system size are coupled via the scaling $\lambda = s - \gamma \sqrt{s}$;
\item[(ii)] $p_s(0) \cdot \ldots \cdot p_s(n) \to f( (n+1)/\sqrt{s} )$ where $f(x)$ is either a continuous, nonincreasing function, or $f(x) = \indicator{ x \leq \eta }$;
\item[(iii)] There exist sequences $\{ n_s \}_{s \in \naturalNumbersPlus}$, $\{ q_s \}_{s \in \naturalNumbersPlus}$ with $q_s > 0$, and a continuous function $r(x)$ that satisfy the scaling condition $(r_s(k) - n_s) / q_s \to r( (k-s) / \sqrt{s} )$.
\end{enumerate}

It is proven in \cite{janssen_scaled_2013,sanders_optimal_2014} that $\lim_{s \to \infty} ( R_s(\gamma) \allowbreak - n_s ) / q_s = R_0(\gamma)$ under conditions (i)--(iii), with
\begin{equation}
R_0(\gamma) = \frac{ \int_{-\infty}^0 r(x) \e{- \frac{1}{2} x^2 - \gamma x } \d{x} + \int_0^\infty r(x) f(x) \e{-\gamma x} \d{x} }{ \frac{\Phi(\gamma)}{\phi(\gamma)} + \int_0^\infty f(x) \e{-\gamma x} \d{x} }.
\label{eqn:Limit_of_weight_function_Rs}
\end{equation}
Here, $\Phi$ and $\phi$ denote the cumulative distribution function and probability density function of the standard normal distribution. This asymptotic characterization of the revenue is leveraged in \cite{sanders_optimal_2014} to prove that for many revenue structures there exists an optimal admission control policy with a threshold structure. 

Moreover, the method used to obtain \refEquation{eqn:Limit_of_weight_function_Rs} in \cite{janssen_scaled_2013,sanders_optimal_2014} can be extended to derive an asymptotic expansion of the form
\begin{equation}
R_s(\gamma)
= n_s + q_s \Bigl( \sum_{i=0}^j \frac{R_i(\gamma)}{s^{i/2}} + \bigObig{ \frac{1}{s^{(j+1)/2}} } \Bigr),
\label{eqn:Expansion_for_Rs}
\end{equation}
which is shown in \refAppendixSection{sec:Expansions}. We also provide closed-form expressions for the first two terms $R_0(\gamma)$ and $R_1(\gamma)$ for arbitrary $f(x)$ and $r(x)$. The asymptotic expansion in \refEquation{eqn:Expansion_for_Rs} is a crucial ingredient for determining the optimality gaps.


Let us discuss the asymptotic expansion in the context of \refEquation{eqn:Rs_from_intuitive_derivation}. Denoting $n_s = as$ and extracting a term $q_s = \sqrt{s}$ yields $R_s(\gamma) = n_s +  \sqrt{s} \hat{R}_s(\gamma)$ with
\begin{equation}
\hat{R}_s(\gamma) = d \gamma - (a+d) \frac{ I_s(\gamma) }{\sqrt{s}} - b \frac{ Q_s(\gamma) }{\sqrt{s}}.
\label{eqn:Expression_for_Rshat}
\end{equation} 
Since our goal is to maximize $R_s(\gamma)$ over $\gamma$, and because the term $n_s$ is constant and independent of $\gamma$, we only need to focus on the maximization of $\hat{R}_s(\gamma)$.

Recall \refEquation{eqn:Revenue_structure_of_the_economics_optimization_problem} and note that the limiting revenue structure for the objective function in \refEquation{eqn:Expression_for_Rshat} is given by
$r(x) = (a+d)x + d \gamma$ for $x < 0$, and $r(x) = -bx + d \gamma$ for $x \geq 0$.
With an admission control policy $f(x) = \indicator{ x \leq \eta }$ where $\eta \geq 0$ denotes the admission threshold, it follows from \refEquation{eqn:Limit_of_weight_function_Rs} that
\begin{equation}
\lim_{s \to \infty} \hat{R}_s(\gamma)
= \hat{R}_0(\gamma)
= d \gamma - 
\frac{ (a+d) \bigl( 1 + \gamma \frac{\Phi(\gamma)}{\phi(\gamma)} \bigr) +
b \frac{1 - (1+\gamma \eta) \e{-\gamma \eta} }{\gamma^2} }
{\frac{\Phi(\gamma)}{\phi(\gamma)} + \frac{1 - \e{-\gamma \eta}}{\gamma} }.
\label{eqn:Asymptotic_limit_of_economics_optimizaton_problem}
\end{equation}
We prove in \refAppendixSection{sec:Expansions} that $\lim_{s \to \infty} \sqrt{s} ( \hat{R}_s(\gamma) - \hat{R}_0(\gamma) ) = \hat{R}_1(\gamma)$ and provide an explicit expression for $\hat{R}_1(\gamma)$. We then have both a first-order approximation $\hat{R}_0(\gamma)$ and a second-order approximation $\hat{R}_0(\gamma) + \hat{R}_1(\gamma) / \sqrt{s}$ for the objective function $\hat{R}_s(\gamma)$.

\section{General revenue maximization}
\label{sec:Generalized_square_root_staffing}

For general objective functions \refEquation{eqn:Weight_function_Rs}, we now aim for solving the dimensioning problem
\begin{equation}
\max_{\gamma \in \Gamma } \{ R_s(\gamma) \},
\label{eqn:Minimization_problem}
\end{equation}
where we assume that $\Gamma = [\gamma_l, \gamma_r]$ is a compact interval contained in $( \gamma^{\min}_s, \infty )$, with $\gamma^{\min}_s = \inf \{ \gamma \in \realNumbers | F_s(\rho) < \infty \}$. Denote the exact solution by
\begin{equation}
\criticalpoint{\gamma}_s
= \arg \max_{\gamma \in \Gamma } \{ R_s(\gamma) \}.
\label{eqn:Exact_solution}
\end{equation}

We assume that an asymptotic expansion of the form \refEquation{eqn:Expansion_for_Rs} is available for $R_s(\gamma)$ and its derivative ${R_s}'(\gamma)$, which we can then use to approximate the objective function. Hence we will consider
\begin{equation}
\gamma_{j,s}
= \arg \max_{\gamma \in \Gamma } \Bigl\{ R_0(\gamma) + \ldots + \frac{R_j(\gamma)}{s^{j/2}} \Bigr\}
\end{equation}
as approximations for the exact solution $\criticalpoint{\gamma}_s$, which should be increasingly better for larger $j$ and/or $s$. Note that $\gamma_{0,s} = \gamma_0$ is independent of $s$.

Denoting the $i$th derivative of a function $g(x)$ by $g^{(i)}(x)$, we assume also that ${R_0}^{(k)}(\gamma), \ldots, {R_j}^{(k)}(\gamma)$ are bounded on $\Gamma$ for $k = 0, 1, 2$, and that ${R_{j+1}}^{(l)}(\gamma)$ is bounded on $\Gamma$ for $l = 0,1$. We furthermore assume that both the first-order optimizer $\gamma_0$ and the exact optimizer $\criticalpoint{\gamma}_s$ exist, are unique and lie in the interior of $\Gamma$, and that ${R_0}(\gamma)$ is strictly concave on $\Gamma$ and has a continuous derivative ${R_0}''(\gamma)$ on $\Gamma$. Finally, we assume that $f(x)$ is such that
\begin{equation}
\lim_{\gamma \downarrow \gamma^{\min}} \int_0^\infty f(x) \e{-\gamma x} \d{x} 
= \infty,
\label{eqn:Assumption_on_Lf}
\end{equation}
where $\gamma^{\min} = \inf \{ \gamma \in \realNumbers | \int_0^\infty f(x) \e{-\gamma x} \d{x} < \infty \}$. Ref.\ \cite{janssen_scaled_2013} discusses under which conditions assumption \refEquation{eqn:Assumption_on_Lf} is satisfied, and it is satisfied for instance for any admission control policy $f(x) = \indicator{ x \leq \eta }$ with admission threshold $\eta > 0$, since $\int_0^\infty f(x) \e{-\gamma x} \d{x} \allowbreak = (1-\e{-\gamma \eta})/\gamma$ and $\gamma^{\min} = -\infty$.

\subsection{Optimality gaps}
\label{sec:Optimality_gaps}

We now derive the optimality gaps for the general optimization problem in \refEquation{eqn:Minimization_problem}. \refTheorem{thm:Objective_gap} formalizes that the approximating solutions $\gamma_{j,s}$ are asymptotically optimal through bounds for the optimality gaps, and that an approximation of order $j$ yields a gap decay of order $j+1$. With minor modifications to the proof, the result also applies to minimization problems of the form $\min_{\gamma \in \Gamma} \{ R_s(\gamma) \}$.

\begin{theorem}
\label{thm:Objective_gap}
For $j = 0, 1, \ldots$, there exist constants $M_j$ and $K_j$ independent of $s$ and $s_j \in \naturalNumbersPlus$ such that for all $s \geq s_j$,
\begin{equation}
| R_s( \criticalpoint{\gamma}_s ) - R_s( \gamma_{j,s} ) |
\leq \frac{q_s M_j}{ s^{(j+1)/2} },
\quad
| \gamma_{j,s} - \criticalpoint{\gamma}_s |  
\leq \frac{K_j}{ s^{(j+1)/2} }.
\label{eqn:Optimality_gaps_in_main_theorem}
\end{equation}
\end{theorem}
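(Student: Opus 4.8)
The plan is to work with the normalized objective $\hat{R}_s(\gamma) = (R_s(\gamma) - n_s)/q_s$, which shares its maximizer $\criticalpoint{\gamma}_s$ with $R_s$ because $q_s > 0$ and $n_s$ is independent of $\gamma$, and to compare it against the truncated expansion $P_{j,s}(\gamma) = \sum_{i=0}^j R_i(\gamma)/s^{i/2}$, whose maximizer is $\gamma_{j,s}$. By \refEquation{eqn:Expansion_for_Rs} applied to $R_s$ and, as assumed, to ${R_s}'$, I would write $\hat{R}_s(\gamma) = P_{j,s}(\gamma) + E_s(\gamma)$ and $\hat{R}_s'(\gamma) = P_{j,s}'(\gamma) + E_s'(\gamma)$, where both $\sup_{\gamma\in\Gamma}|E_s(\gamma)|$ and $\sup_{\gamma\in\Gamma}|E_s'(\gamma)|$ are $\bigO{1/s^{(j+1)/2}}$; the uniform remainder control follows from the assumed boundedness of ${R_{j+1}}^{(l)}$ on $\Gamma$ for $l=0,1$.

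The objective gap is the easy half and I would obtain it by a two-sided suboptimality (sandwich) estimate that uses only the zeroth-order remainder bound. Since $\gamma_{j,s}$ maximizes $P_{j,s}$ we have $P_{j,s}(\criticalpoint{\gamma}_s) - P_{j,s}(\gamma_{j,s}) \leq 0$, and since $\criticalpoint{\gamma}_s$ maximizes $\hat{R}_s$ the quantity $\hat{R}_s(\criticalpoint{\gamma}_s) - \hat{R}_s(\gamma_{j,s})$ is nonnegative. Substituting $\hat{R}_s = P_{j,s} + E_s$ and cancelling the nonpositive $P_{j,s}$-difference gives $0 \le \hat{R}_s(\criticalpoint{\gamma}_s) - \hat{R}_s(\gamma_{j,s}) \le E_s(\criticalpoint{\gamma}_s) - E_s(\gamma_{j,s}) \le 2\sup_{\gamma\in\Gamma}|E_s(\gamma)|$. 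Multiplying through by $q_s$ then yields the first bound in \refEquation{eqn:Optimality_gaps_in_main_theorem}, with $M_j$ twice the remainder constant. Notably this step needs no curvature and no derivative bounds.

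For the location gap I would first establish consistency: because $P_{j,s} \to R_0$ and $\hat{R}_s \to R_0$ uniformly on the compact $\Gamma$, and $R_0$ is strictly concave with a unique interior maximizer $\gamma_0$, standard argmax continuity forces both $\gamma_{j,s} \to \gamma_0$ and $\criticalpoint{\gamma}_s \to \gamma_0$. Hence there is $s_j$ beyond which both optimizers, and any point between them, lie in a neighborhood $N$ of $\gamma_0$ interior to $\Gamma$, so the first-order conditions $P_{j,s}'(\gamma_{j,s}) = 0$ and $\hat{R}_s'(\criticalpoint{\gamma}_s) = 0$ hold; the latter rewrites as $P_{j,s}'(\criticalpoint{\gamma}_s) = -E_s'(\criticalpoint{\gamma}_s)$. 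Applying the mean value theorem to $P_{j,s}'$ between $\gamma_{j,s}$ and $\criticalpoint{\gamma}_s$ gives $P_{j,s}'(\criticalpoint{\gamma}_s) = P_{j,s}''(\xi)(\criticalpoint{\gamma}_s - \gamma_{j,s})$ for some intermediate $\xi \in N$, so that $|\criticalpoint{\gamma}_s - \gamma_{j,s}| = |E_s'(\criticalpoint{\gamma}_s)|/|P_{j,s}''(\xi)|$.

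It remains to bound the denominator away from zero, and this curvature step is where I expect the main obstacle to lie. Since $R_0''$ is continuous and $R_0''(\gamma_0) < 0$ by strict concavity, shrinking $N$ ensures $R_0'' \le -m$ on $N$ for some $m>0$; and because $P_{j,s}'' = R_0'' + \bigO{1/\sqrt{s}}$ uniformly on $\Gamma$ (the ${R_i}''$ being bounded for $i \le j$), I can enlarge $s_j$ so that $|P_{j,s}''(\xi)| \ge m/2$ throughout $N$. Combining with $|E_s'(\criticalpoint{\gamma}_s)| \le c_1/s^{(j+1)/2}$ gives $|\criticalpoint{\gamma}_s - \gamma_{j,s}| \le (2c_1/m)/s^{(j+1)/2}$, i.e.\ the second bound with $K_j = 2c_1/m$. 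The hard part is precisely this: one must verify that the consistency of \emph{both} optimizers makes the neighborhood $N$ common to them and to $\xi$, and that the perturbed second derivative $P_{j,s}''$ inherits the uniform negativity of $R_0''$ there, since without a curvature lower bound the first-order residual $E_s'$ cannot be converted into a displacement bound on the maximizer.
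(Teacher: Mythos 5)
Your proposal is correct and follows essentially the same route as the paper's proof: the identical suboptimality sandwich (the truncation difference is nonpositive at the pair of optimizers, so the gap is controlled by the remainder) for the objective bound, and the identical first-order-condition-plus-mean-value-theorem argument with a uniform curvature lower bound for the location bound. The only cosmetic difference is that you localize the curvature estimate to a neighborhood $N$ of $\gamma_0$ via argmax consistency, whereas the paper gets interiority of $\gamma_{j,s}$ from endpoint sign conditions on the truncated derivative (its Lemma~1) and the curvature bound globally on the compact $\Gamma$ from the assumed pointwise negativity and continuity of $R_0''$; under the stated assumptions your extra consistency step is sound but not needed.
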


\paragraph{Proof} 
First, we will prove a monotonicity result, as well as the existence of optimizers.

\begin{lemma}
\label{lem:Monotonicity_and_existence}
There is an $s_0 \in \naturalNumbersPlus$ such that for all $s \geq s_0$, the function $R_0(\gamma) + \sum_{i=1}^j R_i(\gamma) / s^{i/2}$ has a unique optimizer $\gamma_{j,s} \in \Gamma$ and a strictly decreasing derivative ${R_0}'(\gamma) + \sum_{i=1}^j {R_i}'(\gamma) / s^{i/2}$.
\end{lemma}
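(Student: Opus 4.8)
The plan is to treat $g_{j,s}(\gamma) := R_0(\gamma) + \sum_{i=1}^j R_i(\gamma)/s^{i/2}$ as a vanishing perturbation of the strictly concave limit $R_0$, and to transfer the good properties of $R_0$ to $g_{j,s}$ for all sufficiently large $s$. First I would control the perturbation uniformly in $\gamma$. Since ${R_i}^{(k)}$ is bounded on $\Gamma$ for $i = 1, \ldots, j$ and $k = 0,1,2$, set $C_k := \sum_{i=1}^j \sup_{\gamma \in \Gamma} | {R_i}^{(k)}(\gamma) |$ and observe that
\[
\Bigl| g_{j,s}^{(k)}(\gamma) - {R_0}^{(k)}(\gamma) \Bigr| \le \sum_{i=1}^j \frac{ |{R_i}^{(k)}(\gamma)| }{ s^{i/2} } \le \frac{ C_k }{ \sqrt{s} }, \qquad k = 0,1,2,
\]
uniformly on $\Gamma$. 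Hence $g_{j,s}$, $g_{j,s}'$ and $g_{j,s}''$ converge uniformly to $R_0$, ${R_0}'$ and ${R_0}''$ at rate $1/\sqrt{s}$.

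Next I would establish the strictly decreasing derivative. Because $R_0$ is strictly concave with continuous ${R_0}''$ on the compact interval $\Gamma$, there is a margin $m > 0$ with ${R_0}''(\gamma) \le -2m$ for all $\gamma \in \Gamma$; taking $s_0$ large enough that $C_2/\sqrt{s} \le m$ for $s \ge s_0$ yields $g_{j,s}''(\gamma) \le -2m + m = -m < 0$ throughout $\Gamma$. Thus $g_{j,s}$ is strictly concave, equivalently $g_{j,s}'$ is strictly decreasing, for every $s \ge s_0$.

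Finally I would pin down the unique interior optimizer. Strict concavity gives at most one maximizer, while continuity of $g_{j,s}$ on the compact $\Gamma$ gives at least one, so $\gamma_{j,s}$ is unique. For interiority, note that since $\gamma_0 \in \mathrm{int}\,\Gamma$ is the unique maximizer of the strictly concave $R_0$, the strictly decreasing function ${R_0}'$ satisfies ${R_0}'(\gamma_l) > 0 > {R_0}'(\gamma_r)$ at the endpoints of $\Gamma = [\gamma_l, \gamma_r]$. By the uniform convergence of the first step (enlarging $s_0$ if necessary) the same strict inequalities hold for $g_{j,s}'$, so $g_{j,s}$ is increasing at $\gamma_l$ and decreasing at $\gamma_r$; its unique critical point therefore lies in $\mathrm{int}\,\Gamma$ and, by strict concavity, is the global maximizer $\gamma_{j,s}$.

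The main obstacle is the second step: strict concavity by itself only guarantees ${R_0}'' \le 0$, so the uniform margin ${R_0}'' \le -2m$ must be extracted separately — it follows from continuity on the compact $\Gamma$ once ${R_0}''$ is known to be strictly negative (equivalently, the maximizer is nondegenerate). This nonvanishing is genuinely needed: at a point where ${R_0}''$ vanishes, the $\bigO{1/\sqrt{s}}$ perturbation can flip the sign of $g_{j,s}''$ for every $s$, so the endpoint-sign argument would still deliver existence and uniqueness of $\gamma_{j,s}$, but the \emph{strictly decreasing derivative} conclusion would fail without a strictly negative ${R_0}''$.
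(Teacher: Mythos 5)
Your proof is correct and takes essentially the same route as the paper: both transfer the endpoint sign conditions ${R_0}'(\gamma_l) > 0 > {R_0}'(\gamma_r)$ and the strict negativity of ${R_0}''$ to the perturbed function via the uniform $O(1/\sqrt{s})$ bound on the correction terms, the only difference being that the paper writes explicit thresholds (e.g.\ $s_3 = \max_{\gamma} \lceil ( \sum_{i=1}^j |{R_i}''(\gamma)| / {R_0}''(\gamma) )^2 \rceil$) where you use uniform margins such as ${R_0}'' \leq -2m$. Your closing caveat is also consistent with the paper, whose proof simply asserts ``by assumption, ${R_0}''(\gamma) < 0$ for all $\gamma \in [\gamma_l,\gamma_r]$'', i.e.\ it reads its hypothesis of strict concavity with continuous ${R_0}''$ as exactly the pointwise negativity you identified as being genuinely needed.
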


\begin{proof}
Recall that $\gamma_0$ lies in the interior of $\Gamma$ and that ${R_0}'(\gamma)$ is strictly decreasing on $\Gamma$ by assumption, which implies that ${R_0}'(\gamma_l) > 0$. We seek $s_1 \in \naturalNumbersPlus$ such that for all $s \geq s_1$, ${R_0}'(\gamma_l) \allowbreak + \allowbreak \sum_{i=1}^j {R_i}'(\gamma_l) / s^{i/2} > 0$. Note next that
$(1/\sqrt{s}) \sum_{i=1}^j | {R_i}'(\gamma_l) | \allowbreak \geq - \sum_{i=1}^j {R_i}'(\gamma_l) / s^{i/2}$ for $s \in \naturalNumbersPlus$.
For all $s \in \naturalNumbersPlus$ for which ${R_0}'(\gamma_l) \allowbreak > \allowbreak (1/\sqrt{s}) \sum_{i=1}^j | {R_i}'(\gamma_l) |$, we have thus consequently that ${R_0}'(\gamma_l) \allowbreak > - \sum_{i=1}^j {R_i}'(\gamma_l) / \allowbreak s^{i/2} $. We therefore pick
$
s_1 = \lceil ( \sum_{i=1}^j | {R_i}'(\gamma_l) | / \allowbreak {R_0}'(\gamma_l) )^2 \rceil
$
to ensure that for all $s \geq s_1$, ${R_0}'(\gamma_l) + \sum_{i=1}^j {R_i}'(\gamma_l) / s^{i/2} \allowbreak > 0$. A similar result holds at $\gamma = \gamma_r$, i.e.\ we have ${R_0}'(\gamma_r) < 0$, and thus
$
s_2 = \lceil ( \sum_{i=1}^j | {R_i}'(\gamma_r) | \allowbreak / \allowbreak {R_0}'(\gamma_r) )^2 \rceil
$
is such that for all $s \geq s_2$, ${R_0}'(\gamma_r) + \sum_{i=1}^j {R_i}'(\gamma_r) / s^{i/2} < 0$. The function $R_0(\gamma) + \sum_{i=1}^j R_i(\gamma) / s^{i/2}$ thus has a unique optimizer $\gamma_{j,s} \in \Gamma$.

Finally we turn to proving the monotonicity property of ${R_0}'(\gamma) \allowbreak + \sum_{i=1}^j {R_i}'(\gamma) / \allowbreak s^{i/2}$. By assumption, ${R_0}''(\gamma) < 0$ for all $\gamma \in [\gamma_l,\gamma_r]$. Similar to before, set
$
s_3 = \max_{\gamma \in [\gamma_l,\gamma_r]} \lceil ( \sum_{i=1}^j | {R_i}''(\gamma) | / \allowbreak {R_0}''(\gamma) )^2 \rceil,
$
and conclude that ${R_0}''(\gamma) + \sum_{i=1}^j {R_i}''(\gamma) / \allowbreak s^{i/2} < 0$ for all $s \geq s_3$ and all $\gamma \in [\gamma_l,\gamma_r]$. Finish the proof by setting $s_0 = \max \{ s_1, s_2, s_3 \}$, and by noting that $s_0$ is bounded.
\end{proof}

Recall that the unique optimizer $\criticalpoint{\gamma}_s$ exists, and lies in the interior of $\Gamma$. Because $\criticalpoint{\gamma}_s$ maximizes $R_s(\gamma)$, we have therefore by suboptimality that
\begin{align}
&
0 
\leq R_s(\criticalpoint{\gamma}_s) - R_s(\gamma_{j,s})
= \Bigl[ q_s \sum_{i=0}^j \frac{R_i(\gamma_{j,s})}{s^{i/2}} - R_s(\gamma_{j,s}) \Bigr] 
\\ &
- \Bigl[ q_s \sum_{i=0}^j \frac{R_i(\criticalpoint{\gamma}_s)}{s^{i/2}} - R_s(\criticalpoint{\gamma}_s) \Bigr] 
+ q_s \Bigl[ \sum_{i=0}^j \frac{R_i(\criticalpoint{\gamma}_s)}{s^{i/2}} - \sum_{i=0}^j \frac{R_i(\gamma_{j,s})}{s^{i/2}} \Bigr],
\nonumber
\end{align}
and subsequently by expansion \refEquation{eqn:Expansion_for_Rs} that
\begin{align}
0 & \leq R_s(\criticalpoint{\gamma}_s) - R_s(\gamma_{j,s})
= q_s \Bigl[ \sum_{i=0}^j \frac{R_i(\criticalpoint{\gamma}_s)}{s^{i/2}} - \sum_{i=0}^j \frac{R_i(\gamma_{j,s})}{s^{i/2}} \Bigr] 
\nonumber \\ &
\phantom{\leq} + q_s \frac{R_{j+1}(\criticalpoint{\gamma}_s) - R_{j+1}(\gamma_{j,s})}{s^{(j+1)/2}} 
+ \bigObig{ \frac{q_s}{s^{(j+2)/2}} }.
\end{align}
Since $\gamma_{j,s}$ maximizes $\sum_{i=0}^j R_i(\gamma) / s^{i/2}$, we have by suboptimality that the term within square brackets is negative, i.e.\
\begin{equation}
\sum_{i=0}^j \frac{R_i(\criticalpoint{\gamma}_s)}{s^{i/2}} - \sum_{i=0}^j \frac{R_i(\gamma_{j,s})}{s^{i/2}}
\leq 0
\end{equation}
for all $s \in \naturalNumbersZero$. Therefore, since $q_s > 0$,
\begin{equation}
0 
\leq R_s(\criticalpoint{\gamma}_s) - R_s(\gamma_{j,s})
\leq q_s \frac{R_{j+1}(\criticalpoint{\gamma}_s) - R_{j+1}(\gamma_{j,s})}{s^{(j+1)/2}} + \bigObig{ \frac{q_s}{s^{(j+2)/2}} }.
\end{equation}
Since $R_{j+1}(\gamma)$ is bounded on $\Gamma$ by assumption, the first claim in \refEquation{eqn:Optimality_gaps_in_main_theorem} follows. 

\begin{corollary}
\label{cor:Bound_on_derivative_of_expansion_at_gamma_opt}
There exist constants ${M_j}' > 0$ independent of $s$ and ${s_j}' \in \naturalNumbersPlus$ such that for all $s \geq {s_j}'$,
\begin{equation}
\Bigl| \sum_{i=0}^j \frac{{R_i}'(\criticalpoint{\gamma}_s)}{s^{i/2}} - \sum_{i=0}^j \frac{{R_i}'(\gamma_{j,s})}{s^{i/2}} \Bigr|
= \Bigl| \sum_{i=0}^j \frac{{R_i}'(\criticalpoint{\gamma}_s)}{s^{i/2}} \Bigr|
\leq \frac{{M_j}'}{s^{(j+1)/2}}. \label{eqn:Bound_on_derivative_of_expansion_at_gamma_opt}
\end{equation}
\end{corollary}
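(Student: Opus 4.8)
The plan is to read off the result from the two first-order optimality conditions—one for $\gamma_{j,s}$ and one for $\criticalpoint{\gamma}_s$—together with the assumed asymptotic expansion of ${R_s}'(\gamma)$. First I would dispose of the equality in \refEquation{eqn:Bound_on_derivative_of_expansion_at_gamma_opt}. By \refLemma{lem:Monotonicity_and_existence}, for all $s \geq s_0$ the approximant $R_0(\gamma) + \sum_{i=1}^j R_i(\gamma)/s^{i/2}$ has a unique maximizer $\gamma_{j,s}$; moreover, the proof of that lemma shows its derivative is strictly positive at $\gamma_l$ and strictly negative at $\gamma_r$, so $\gamma_{j,s}$ must lie in the interior of $\Gamma$. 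Consequently the stationarity condition $\sum_{i=0}^j {R_i}'(\gamma_{j,s})/s^{i/2} = 0$ holds, which is precisely the assertion that the second sum on the left-hand side vanishes, yielding the claimed equality for all $s \geq s_0$.

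For the inequality I would use that $\criticalpoint{\gamma}_s$ is an interior maximizer of $R_s$ by assumption, so ${R_s}'(\criticalpoint{\gamma}_s) = 0$. Inserting the expansion of ${R_s}'(\gamma)$ carried one order beyond $j$,
\[
{R_s}'(\gamma)
= q_s \Bigl( \sum_{i=0}^{j} \frac{{R_i}'(\gamma)}{s^{i/2}} + \frac{{R_{j+1}}'(\gamma)}{s^{(j+1)/2}} + \bigObig{ \frac{1}{s^{(j+2)/2}} } \Bigr),
\]
and dividing by $q_s > 0$, the identity ${R_s}'(\criticalpoint{\gamma}_s) = 0$ rearranges into
\[
\sum_{i=0}^{j} \frac{{R_i}'(\criticalpoint{\gamma}_s)}{s^{i/2}}
= - \frac{{R_{j+1}}'(\criticalpoint{\gamma}_s)}{s^{(j+1)/2}} - \bigObig{ \frac{1}{s^{(j+2)/2}} }.
\]
Taking absolute values and invoking the assumption that ${R_{j+1}}'$ is bounded on $\Gamma$, say by $B$, while the remainder is bounded by $C/s^{(j+2)/2}$, gives $\bigl| \sum_{i=0}^{j} {R_i}'(\criticalpoint{\gamma}_s)/s^{i/2} \bigr| \leq B/s^{(j+1)/2} + C/s^{(j+2)/2} \leq (B+C)/s^{(j+1)/2}$ once $s \geq 1$. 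Hence ${M_j}' = B + C$ and a threshold ${s_j}'$ equal to $s_0$ together with whatever value makes the remainder estimate valid complete the argument.

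The one point demanding care is uniformity: because I evaluate the expansion at the $s$-dependent point $\criticalpoint{\gamma}_s$, the remainder $\bigObig{1/s^{(j+2)/2}}$ and the bound on ${R_{j+1}}'$ must hold uniformly over $\gamma \in \Gamma$. This is exactly why the hypotheses demand boundedness of the derivatives on all of $\Gamma$ rather than at a single point; since $\criticalpoint{\gamma}_s \in \Gamma$ with $\Gamma$ compact, the uniform constants apply at $\criticalpoint{\gamma}_s$ for every $s$. No deeper obstacle arises, as the estimate is essentially a one-line consequence of stationarity once the uniformity of the expansion is in place.
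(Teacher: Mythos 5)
Your proposal is correct and follows essentially the same route as the paper: the equality comes from the stationarity of $\gamma_{j,s}$ (with interiority supplied by \refLemma{lem:Monotonicity_and_existence}, which you make explicit where the paper leaves it implicit), and the inequality comes from evaluating the assumed expansion of ${R_s}'$ at $\criticalpoint{\gamma}_s$, using ${R_s}'(\criticalpoint{\gamma}_s) = 0$, dividing by $q_s > 0$, and invoking boundedness of ${R_{j+1}}'$ on $\Gamma$. Your closing remark on uniformity of the remainder over the compact set $\Gamma$ is a sensible point of care that the paper's proof uses tacitly.
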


\begin{proof}
Note that $\gamma_{j,s}$ is the optimizer of $\sum_{i=0}^j R_i(\gamma) / s^{i/2}$, and that therefore $\sum_{i=0}^j {R_i}'(\gamma_{j,s}) / s^{i/2} = 0$, which proves the leftmost equality. 

Next, we examine the asymptotic expansion of the derivative of $R_s(\gamma)$, which we have assumed is available and of the form \refEquation{eqn:Expansion_for_Rs}. It follows that
\begin{align}
q_s \sum_{i=0}^j \frac{{R_i}'(\criticalpoint{\gamma}_s)}{s^{i/2}} 
&
= R_s'(\criticalpoint{\gamma}_s) - q_s \frac{{R_{j+1}}'(\criticalpoint{\gamma}_s)}{s^{(j+1)/2}} + \bigObig{ \frac{q_s}{s^{(j+2)/2}} }
\nonumber \\ &
\overset{\textrm{(i)}}= - q_s \frac{{R_{j+1}}'(\criticalpoint{\gamma}_s)}{s^{(j+1)/2}} + \bigObig{ \frac{q_s}{s^{(j+2)/2}} },
\end{align}
since (i) $\criticalpoint{\gamma}_s$ optimizes $R_s(\gamma)$. Now \refEquation{eqn:Bound_on_derivative_of_expansion_at_gamma_opt} follows since ${R_{j+1}}'(\gamma)$ is bounded on $\Gamma$.
\end{proof}

We are now ready to establish the second claim in \refEquation{eqn:Optimality_gaps_in_main_theorem}. Recall that for all $j \in \naturalNumbersPlus$, and sufficiently large $s$, the function $\sum_{i=0}^j {R_i}'(\gamma) / s^{(i/2)}$ is strictly decreasing in $[\gamma_l,\gamma_r]$, see \refLemma{lem:Monotonicity_and_existence}. Note also that $\gamma_0, \criticalpoint{\gamma}_s, \gamma_{j,s} \in [\gamma_l,\gamma_r]$. The mean value theorem implies then that 
\begin{equation}
\Bigl| \sum_{i=0}^j \frac{{R_i}'(\criticalpoint{\gamma}_s)}{s^{(i/2)}} - \sum_{i=0}^j \frac{{R_i}'(\gamma_{j,s})}{s^{(i/2)}} \Bigr|
\geq m_{j,s} | \criticalpoint{\gamma}_s - \gamma_{j,s} |
\end{equation}
with
\begin{equation}
m_{j,s} = - \max_{\gamma \in [\gamma_l,\gamma_r]} \Bigl\{ {R_0}''(\gamma) + \sum_{i=1}^j \frac{{R_i}''(\gamma)}{s^{i/2}} \Bigr\},
\end{equation}
where we have also used that the function $\sum_{i=0}^j R_i(\gamma) / s^{(i/2)}$ is optimized by $\gamma_{j,s}$. Combining with \refCorollary{cor:Bound_on_derivative_of_expansion_at_gamma_opt}, it follows that
\begin{equation}
\frac{{M_j}'}{s^{(j+1)/2}} 
\geq \Bigl| \sum_{i=0}^j \frac{{R_i}'(\criticalpoint{\gamma}_s)}{s^{i/2}} - \sum_{i=0}^j \frac{{R_i}'(\gamma_{j,s})}{s^{i/2}} \Bigr|
\geq m_{j,s} | \criticalpoint{\gamma}_s - \gamma_{j,s} |,
\end{equation}
which is almost the second claim in \refEquation{eqn:Optimality_gaps_in_main_theorem}. What remains is to remove the dependency on $s$ of $m_{j,s}$. 

To that end, remark that $\max_{\gamma \in [\gamma_l, \gamma_r]} {R_0}''(\gamma) < 0$ by continuity and pointwise negativity of ${R_0}''(\gamma)$. Then, bound
\begin{align}
m_{j,s} 
&
= \min_{\gamma \in [\gamma_l,\gamma_r]} \Bigl\{ - {R_0}''(\gamma) - \sum_{i=1}^j \frac{{R_i}''(\gamma)}{s^{i/2}} \Bigr\}
\nonumber \\ &
\geq \min_{\gamma \in [\gamma_l,\gamma_r]} \{ - {R_0}''(\gamma) \} + \frac{1}{\sqrt{s}} \min_{\gamma \in [\gamma_l,\gamma_r]} \Bigl\{ - \sum_{i=1}^j \frac{{R_i}''(\gamma)}{s^{(i-1)/2}} \Bigr\} 
\\ &
\geq \min_{\gamma \in [\gamma_l,\gamma_r]} \{ - {R_0}''(\gamma) \} - \frac{1}{\sqrt{{s_j}'}} \max_{\gamma \in [\gamma_l,\gamma_r]} \Bigl\{ \sum_{i=1}^j \frac{{R_i}''(\gamma)}{ ({s_j}')^{(i-1)/2}} \Bigr\} 
=: m_{j},
\nonumber 
\end{align}
and increase the value of ${s_j}'$ if necessary to ensure that $m_j > 0$. 

Summarizing, we now have that
$
{M_j}' / s^{(j+1)/2} \geq m_j | \criticalpoint{\gamma}_s - \gamma_{j,s} |
$
for all $s \geq {s_j}'$. Setting $K_j = {M_j}' / m_j$ completes the proof. \QuodEratDemonstrandum

\subsection{Dimensioning under a delay constraint}
\label{sec:Staffing_under_a_delay_constraint}

The approach of \refSection{sec:Optimality_gaps} can also be used to solve delay constrained dimensioning problems. As an example, consider finding
\begin{equation}
\criticalpoint{\gamma}_s = \arg_{ \gamma \in \Gamma } \{ D_s(\gamma) = \epsilon \},
\label{eqn:Staffing_under_a_delay_constraint_problem}
\end{equation}
where $\epsilon \in (0,1)$. Since we have an asymptotic expansion of the form \refEquation{eqn:Expansion_for_Rs} for the delay probability, see \refAppendixSection{sec:Expansions}, instead of solving for $\criticalpoint{\gamma}_s$ directly we can obtain approximations 
$
\gamma_{j,s} = \arg_{\gamma \in \Gamma_j} \{ \sum_{i=0}^j D_i(\gamma) / \allowbreak s^{i/2} = \varepsilon \}
$. The optimality gaps can be calculated using a similar proof technique. 

\begin{corollary}
\label{cor:Delay_probability_gaps}
For $j = 0, 1, \ldots$ there exist finite constants $M_j$, $K_j > 0$ independent of $s$ and $s_j \in \naturalNumbersPlus$ such that for all $s \geq s_j$,
\begin{equation}
| D_s( \gamma_{j,s} ) - \varepsilon |
\leq \frac{M_j}{s^{(j+1)/2}},
\quad
| \gamma_{j,s} - \criticalpoint{\gamma}_s | 
\leq \frac{K_j}{s^{(j+1)/2}}.
\label{eqn:Delay_probability_gaps}
\end{equation}
\end{corollary}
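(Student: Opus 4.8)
The plan is to mirror the proof of \refTheorem{thm:Objective_gap}, with the role played there by strict concavity of $R_0$ now taken over by strict monotonicity of $D_0$. The natural standing assumptions are that $D_0$ has a continuous derivative on $\Gamma$ with $\max_{\gamma\in\Gamma}{D_0}'(\gamma)<0$ (the delay probability decreases in $\gamma$, since a larger $\gamma$ lowers the load $\lambda=s-\gamma\sqrt{s}$), that $\varepsilon$ lies strictly between $D_0(\gamma_r)$ and $D_0(\gamma_l)$ so that the first-order root $\gamma_0$ is interior, and that the relevant derivatives of $D_0,\ldots,D_j$ and of $D_{j+1}$ are bounded on $\Gamma$. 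Under these hypotheses I would first reprove \refLemma{lem:Monotonicity_and_existence} essentially verbatim: for $s$ beyond some $s_0$ the truncated function $\sum_{i=0}^j D_i(\gamma)/s^{i/2}$ is strictly decreasing on $\Gamma$, because the perturbation $\sum_{i=1}^j {D_i}'(\gamma)/s^{i/2}$ is $\bigO{1/\sqrt{s}}$ uniformly and cannot overturn the sign of ${D_0}'$; combined with an endpoint comparison this yields a unique interior root $\gamma_{j,s}$ of the equation $\sum_{i=0}^j D_i(\gamma)/s^{i/2}=\varepsilon$.

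First I would bound $|D_s(\gamma_{j,s})-\varepsilon|$. By definition $\sum_{i=0}^j D_i(\gamma_{j,s})/s^{i/2}=\varepsilon$, so subtracting this from the order-$(j+1)$ expansion \refEquation{eqn:Expansion_for_Rs} of $D_s$ (here with $n_s=0$ and $q_s=1$) gives
\[
|D_s(\gamma_{j,s})-\varepsilon|
= \Bigl| \frac{D_{j+1}(\gamma_{j,s})}{s^{(j+1)/2}} + \bigObig{\frac{1}{s^{(j+2)/2}}} \Bigr|
\leq \frac{M_j}{s^{(j+1)/2}}
\]
for all sufficiently large $s$, since $D_{j+1}$ is bounded on $\Gamma$. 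This establishes the first claim and is the exact analogue of the suboptimality bound on the objective in \refTheorem{thm:Objective_gap}.

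Next I would convert this into a bound on $|\gamma_{j,s}-\criticalpoint{\gamma}_s|$. Since $\criticalpoint{\gamma}_s$ solves $D_s(\criticalpoint{\gamma}_s)=\varepsilon$ exactly, the previous display reads $|D_s(\gamma_{j,s})-D_s(\criticalpoint{\gamma}_s)|\leq M_j/s^{(j+1)/2}$. Applying the mean value theorem to $D_s$ between $\gamma_{j,s}$ and $\criticalpoint{\gamma}_s$ gives
\[
\frac{M_j}{s^{(j+1)/2}}
\geq |D_s(\gamma_{j,s})-D_s(\criticalpoint{\gamma}_s)|
\geq m_{j,s}\,|\gamma_{j,s}-\criticalpoint{\gamma}_s|,
\qquad
m_{j,s}=\min_{\gamma\in\Gamma}|{D_s}'(\gamma)|,
\]
so the remaining task is to bound $m_{j,s}$ away from zero uniformly in $s$. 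For this I would use the assumed expansion of the derivative, ${D_s}'(\gamma)=\sum_{i=0}^j {D_i}'(\gamma)/s^{i/2}+\bigO{s^{-(j+1)/2}}$, and lower-bound $m_{j,s}$ by $\min_{\gamma\in\Gamma}\{-{D_0}'(\gamma)\}$ minus an $\bigO{1/\sqrt{s}}$ correction, exactly as the passage from $m_{j,s}$ to $m_j$ in the proof of \refTheorem{thm:Objective_gap}. Enlarging $s_j$ to keep this bound $m_j>0$ and setting $K_j=M_j/m_j$ then finishes the argument.

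The hard part will be the uniform lower bound $m_j>0$ on $|{D_s}'|$: it is precisely what guarantees that an $\bigO{s^{-(j+1)/2}}$ error in the constraint value translates into an $\bigO{s^{-(j+1)/2}}$ error in the root, and it is here that the nondegeneracy ${D_0}'<0$ together with the expansion of ${D_s}'$ is genuinely needed, in place of the concavity used in \refTheorem{thm:Objective_gap}. A secondary point to verify carefully is the existence of $\gamma_{j,s}$ in the interior of $\Gamma$, which follows from the intermediate value theorem once the endpoint values of the truncation straddle $\varepsilon$ for large $s$, guaranteed again by uniform closeness of $\sum_{i=0}^j D_i(\gamma)/s^{i/2}$ to $D_0(\gamma)$.
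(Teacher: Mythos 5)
Your proposal is correct, and it is essentially the argument the paper intends: the paper gives no separate proof of \refCorollary{cor:Delay_probability_gaps}, stating only that it follows by the ``similar proof technique'' of \refTheorem{thm:Objective_gap}, which is precisely what you carry out (defining equation plus order-$(j+1)$ expansion for the first bound; mean value theorem plus a uniform nondegeneracy bound on the derivative for the second, with monotonicity of $D_0$ replacing concavity of $R_0$). The only cosmetic deviation is that you apply the mean value theorem to $D_s$ itself, using the assumed expansion of ${D_s}'$ to bound $\min_{\gamma \in \Gamma} |{D_s}'(\gamma)|$ away from zero, whereas the template of \refTheorem{thm:Objective_gap} would apply it to the truncated sum $\sum_{i=0}^j D_i(\gamma)/s^{i/2}$, whose derivative is bounded below directly via ${D_0}' < 0$ and boundedness of ${D_i}'$; both routes are valid under the paper's standing assumptions.
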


\section{Approaches to asymptotic dimensioning}
\label{sec:Possible_future_directions}

\subsection{Asymptotic revenue maximization with a threshold}
\label{sec:Numerical_results}

We will now consider new approaches to asymptotic dimensioning in the context of optimizing the objective function \refEquation{eqn:Economics_optimization_problem}. We start with considering the revenue maximization problem described in \refSection{sec:Service_systems_with_admission_control_and_revenues} while assuming that the threshold is fixed. This concretely requires us to
maximize $\hat{R}_0(\gamma)$ in \refEquation{eqn:Expression_for_Rshat} over $\gamma$ given a fixed $\eta < \infty$.

The accuracy of our asymptotic expansion as an approximation to the objective function $\hat{R}_s(\gamma)$ is examined in \refFigure{fig:Revenue_together_with_its_first_and_second_order_approximations}, which shows the function $\hat{R}_s(\gamma)$ with its first- and second-order approximations for a system of size $s = 10$. We conclude that both approximations are remarkably accurate for this relatively small system. Near the optimizer $\criticalpoint{\gamma}_s$, the second-order approximation is almost indistinguishable from the objective function. The maximizer of the second-order approximation $\hat{R}_0(\gamma) + \hat{R}_1(\gamma) / \sqrt{s}$ is also closer to the maximizer of $\hat{R}_s(\gamma)$ than the maximizer of the first-order approximation $\hat{R}_0(\gamma)$ is. This illustrates that including higher-order correction terms in the asymptotic expansion indeed reduces the optimality gap. 

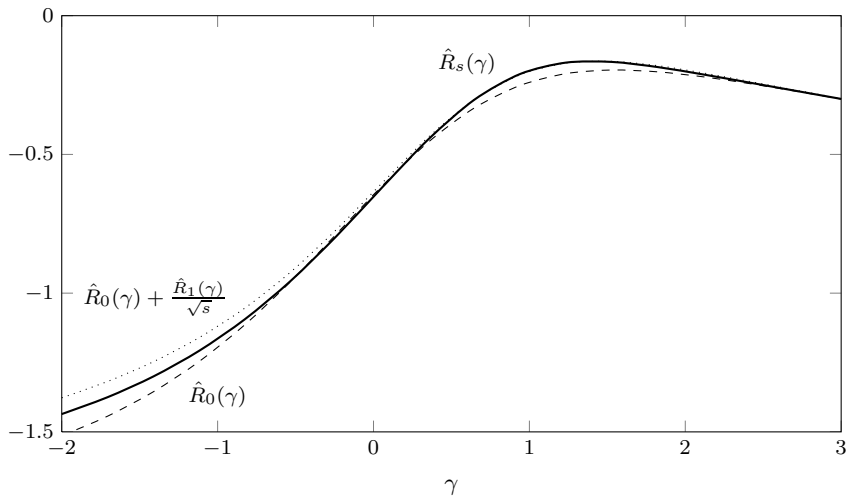
\begin{figure}[!hbtp]
\footnotesize
\centering
\begin{tikzpicture}%
\begin{groupplot}[
	view={0}{90},
	small,
	ymin=0,
	width=0.985\columnwidth, 
	height=0.95*0.61803398875\columnwidth,
	scaled x ticks={real:1.0},
	xtick scale label code/.code={}, 
    group/xlabels at = edge bottom,
    group style = { group size = 1 by 1,
    				xlabels at = edge bottom, 
    				xticklabels at = edge bottom,    				
    				ylabels at = edge right, 
    				horizontal sep = 25pt,
    				vertical sep = 15pt,    				
    				}	
   	]

\nextgroupplot[
				    xmin=-2, xmax=3,
					xtick={-2,-1,0,...,3},
				    ymin=-1.5, ymax=0,
		    		ytick={-1.5,-1,...,0},
		    		xlabel={$\gamma$},
				   ]
				   
\addplot[color=black, thick, smooth] plot coordinates {
(-2., -1.43577) (-1.70588, -1.37503) (-1.41176, -1.30016) (-1.11765, -1.20752) (-0.823529, -1.09332) (-0.529412, -0.954879) (-0.235294, -0.793216) (0.0588235, -0.616957) (0.352941, -0.445016) (0.647059, -0.30283) (0.941176, -0.210045) (1.23529, -0.169083) (1.52941, -0.166667) (1.82353, -0.185254) (2.11765, -0.212128) (2.41176, -0.241195) (2.70588, -0.270588) (3., -0.3) 
};

\addplot[color=black, dashed, smooth] plot coordinates {
(-2., -1.5141) (-1.70588, -1.44343) (-1.41176, -1.3553) (-1.11765, -1.24601) (-0.823529, -1.11301) (-0.529412, -0.95712) (-0.235294, -0.785191) (0.0588235, -0.611512) (0.352941, -0.455063) (0.647059, -0.33248) (0.941176, -0.251297) (1.23529, -0.208727) (1.52941, -0.195786) (1.82353, -0.202419) (2.11765, -0.220537) (2.41176, -0.244758) (2.70588, -0.271953) (3., -0.300485) 
};

\addplot[color=black, dotted, smooth] plot coordinates {
(-2., -1.37721) (-1.70588, -1.31841) (-1.41176, -1.24715) (-1.11765, -1.16007) (-0.823529, -1.0532) (-0.529412, -0.922976) (-0.235294, -0.769401) (0.0588235, -0.600952) (0.352941, -0.437079) (0.647059, -0.302196) (0.941176, -0.213003) (1.23529, -0.170711) (1.52941, -0.164502) (1.82353, -0.180249) (2.11765, -0.206667) (2.41176, -0.236979) (2.70588, -0.268006) (3., -0.298663) 
};


\draw (100,15) node [fill=none, draw=none, text=black, opacity=1, text opacity=1] {$\hat{R}_0(\gamma)$};
\draw (260,135) node [fill=none, draw=none, text=black, opacity=1, text opacity=1] {$\hat{R}_s(\gamma)$};
\draw (60,50) node [fill=none, draw=none, text=black, opacity=1, text opacity=1] {$\hat{R}_0(\gamma) + \frac{\hat{R}_1(\gamma)}{ \sqrt{s}}$};

\end{groupplot}
\end{tikzpicture}
\vspace{-1em}
\caption{The function $\hat{R}_s(\gamma)$ as function of $\gamma$ for $a = 0.1$, $b = 1$, and $\eta = 2$, for a system of size $s = 10$ with its first-order approximation $\hat{R}_0(\gamma)$ (dashed curve) and its second-order approximation $\hat{R}_0(\gamma) + \hat{R}_1(\gamma) / \sqrt{s}$ (dotted curve).}
\label{fig:Revenue_together_with_its_first_and_second_order_approximations}
\end{figure}

The absolute error $|\hat{R}_s(\gamma) - \hat{R}_0(\gamma) - \hat{R}_1(\gamma) / \sqrt{s} |$ is plotted in \refFigure{fig:Absolute_error_in_revenue_function_with_two_terms} as function of $s$ for $\gamma = 2$. A fit is provided which confirms that the asymptotic expansion is indeed accurate up to $\bigO{1/s}$, as suggested by the asymptotic expansion in \refEquation{eqn:Expansion_for_Rs}. The jumps in the data points are caused by rounding in the admission control, since $p_s(k-s) = \indicator{ k - s \leq \lfloor \eta \sqrt{s} \rfloor }$. 

We also examine optimality gaps in \refFigure{fig:Absolute_error_in_revenue_function_with_two_terms}, which shows first- and second-order optimality gaps. Again notice that jumps occur because of the rounding in the control policy. Furthermore, we have provided fits that confirm that the optimality gap is of order $\bigO{1/\sqrt{s}}$ when the asymptotic approximation is of order $\bigO{1}$, and that the optimality is of order $\bigO{1/s}$ when the asymptotic approximation is of order $\bigO{1/\sqrt{s}}$.

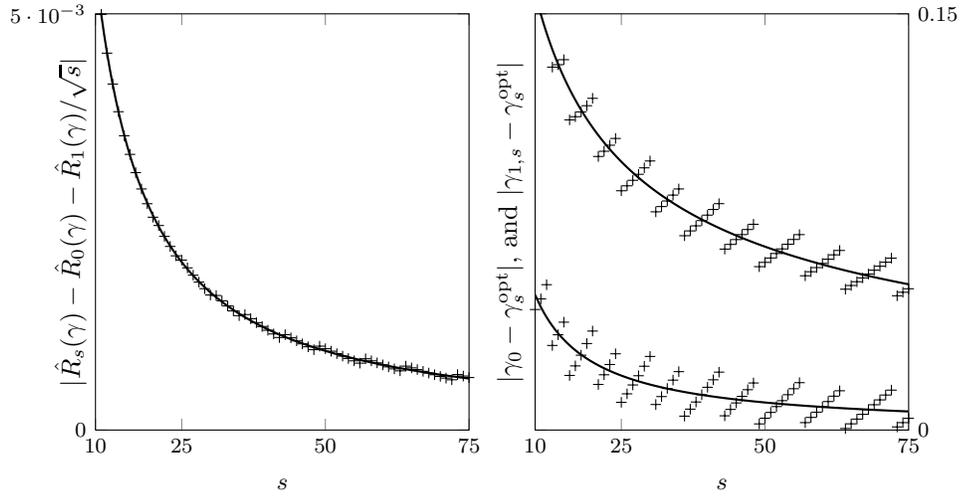
\begin{figure}[!hbtp]
\centering
\begin{tikzpicture}%
\begin{groupplot}[
	view={0}{90},
	small,
	ymin=0,
	width=0.54\columnwidth, 
	height=0.95*0.61803398875\columnwidth,
	scaled x ticks={real:1.0},
	xtick scale label code/.code={}, 
	ylabel absolute, ylabel style={yshift=-0.9cm},
    group/xlabels at = edge bottom,
    group style = { group size = 2 by 1,
    				xlabels at = edge bottom, 
    				xticklabels at = edge bottom,    				
    				ylabels at = edge right, 
    				horizontal sep = 25pt,
    				vertical sep = 15pt,    				
    				}	
   	]

\nextgroupplot[
				    xmin=10, xmax=75,
					xtick={10,25,50,75},
				    ymin=0, ymax=0.005,
		    		ytick={0,0.005},
		    		xlabel={$s$},
	    			ylabel={$| \hat{R}_s(\gamma) - \hat{R}_0(\gamma) - \hat{R}_1(\gamma) / \sqrt{s} |$}
				   ]
				   
\addplot[only marks, mark=+, color=black] plot coordinates {
(10., 0.00555306) (11., 0.00499371) (12., 0.00452777) (13., 0.00415657) (14., 0.00382412) (15., 0.00353495) (16., 0.00331175) (17., 0.0030924) (18., 0.00289604) (19., 0.00271854) (20., 0.00255668) (21., 0.00245708) (22., 0.00232669) (23., 0.00220588) (24., 0.00209326) (25., 0.00204168) (26., 0.00194835) (27., 0.00186043) (28., 0.00177721) (29., 0.00169811) (30., 0.00162262) (31., 0.00161639) (32., 0.00155227) (33., 0.00149072) (34., 0.00143147) (35., 0.00137425) (36., 0.00138717) (37., 0.00133792) (38., 0.00129018) (39., 0.00124381) (40., 0.00119867) (41., 0.00115466) (42., 0.00111166) (43., 0.00114534) (44., 0.00110791) (45., 0.00107126) (46., 0.00103533) (47., 0.00100005) (48., 0.000965381) (49., 0.00100752) (50., 0.000977115) (51., 0.000947185) (52., 0.00091769) (53., 0.000888598) (54., 0.000859878) (55., 0.000831505) (56., 0.000803454) (57., 0.000856326) (58., 0.000831572) (59., 0.000807072) (60., 0.000782809) (61., 0.000758766) (62., 0.00073493) (63., 0.000711286) (64., 0.000767939) (65., 0.000746977) (66., 0.000726167) (67., 0.0007055) (68., 0.000684967) (69., 0.000664559) (70., 0.000644269) (71., 0.00062409) (72., 0.000604016) (73., 0.000666569) (74., 0.000648688) (75., 0.000630888) 
};

\addplot[color=black, thick, smooth] plot coordinates {
(10., 0.00555172) (11., 0.00499446) (12., 0.00453507) (13., 0.00415018) (14., 0.00382327) (15., 0.00354234) (16., 0.00329846) (17., 0.00308486) (18., 0.00289632) (19., 0.00272872) (20., 0.00257882) (21., 0.002444) (22., 0.00232213) (23., 0.00221145) (24., 0.00211051) (25., 0.00201811) (26., 0.00193321) (27., 0.00185497) (28., 0.00178263) (29., 0.00171556) (30., 0.00165322) (31., 0.00159513) (32., 0.00154087) (33., 0.0014901) (34., 0.00144248) (35., 0.00139774) (36., 0.00135563) (37., 0.00131592) (38., 0.00127843) (39., 0.00124296) (40., 0.00120938) (41., 0.00117752) (42., 0.00114727) (43., 0.00111851) (44., 0.00109113) (45., 0.00106504) (46., 0.00104015) (47., 0.00101637) (48., 0.000993651) (49., 0.000971908) (50., 0.000951086) (51., 0.000931127) (52., 0.00091198) (53., 0.000893598) (54., 0.000875936) (55., 0.000858953) (56., 0.000842612) (57., 0.000826878) (58., 0.000811718) (59., 0.000797101) (60., 0.000783) (61., 0.000769389) (62., 0.000756242) (63., 0.000743536) (64., 0.000731251) (65., 0.000719365) (66., 0.000707861) (67., 0.00069672) (68., 0.000685926) (69., 0.000675463) (70., 0.000665317) (71., 0.000655472) (72., 0.000645918) (73., 0.00063664) (74., 0.000627627) (75., 0.000618869) 
};

\nextgroupplot[
				    xmin=10, xmax=75,
					xtick={10,25,50,75},
				    ymin=0, ymax=0.15,
		    		ytick={0,0.15},
		    		xlabel={$s$},
	    			ylabel={$| \gamma_0 - \criticalpoint{\gamma}_s |$, and $| \gamma_{1,s} - \criticalpoint{\gamma}_s |$}
				   ]
				   
\addplot[only marks, mark=+, color=black] plot coordinates {
(10., 0.154885) (11., 0.154577) (12., 0.156002) (13., 0.130803) (14., 0.131665) (15., 0.133473) (16., 0.111787) (17., 0.112932) (18., 0.114662) (19., 0.116893) (20., 0.119555) (21., 0.0985954) (22., 0.100468) (23., 0.102645) (24., 0.105088) (25., 0.08624) (26., 0.0880207) (27., 0.0900044) (28., 0.0921686) (29., 0.0944955) (30., 0.0969717) (31., 0.0786495) (32., 0.0805423) (33., 0.0825529) (34., 0.0846715) (35., 0.0868888) (36., 0.0700323) (37., 0.071771) (38., 0.073592) (39., 0.0754894) (40., 0.0774574) (41., 0.0794907) (42., 0.0815847) (43., 0.065213) (44., 0.0669026) (45., 0.068645) (46., 0.0704367) (47., 0.0722747) (48., 0.0741557) (49., 0.0588826) (50., 0.0604243) (51., 0.0620068) (52., 0.0636256) (53., 0.0652786) (54., 0.0669638) (55., 0.0686793) (56., 0.0704235) (57., 0.055599) (58., 0.0570585) (59., 0.0585442) (60., 0.0600549) (61., 0.0615864) (62., 0.0631428) (63., 0.0647204) (64., 0.0507561) (65., 0.0520928) (66., 0.0534494) (67., 0.0548248) (68., 0.0562183) (69., 0.057629) (70., 0.0590561) (71., 0.0604988) (72., 0.0619535) (73., 0.0483999) (74., 0.0496515) (75., 0.0509173) 
};

\addplot[color=black, thick, smooth] plot coordinates {
(10., 0.157489) (11., 0.149795) (12., 0.143083) (13., 0.137162) (14., 0.131887) (15., 0.127149) (16., 0.122863) (17., 0.11896) (18., 0.115387) (19., 0.1121) (20., 0.109063) (21., 0.106246) (22., 0.103623) (23., 0.101172) (24., 0.098877) (25., 0.0967208) (26., 0.0946901) (27., 0.0927734) (28., 0.0909603) (29., 0.0892417) (30., 0.0876099) (31., 0.0860576) (32., 0.0845787) (33., 0.0831676) (34., 0.0818191) (35., 0.0805289) (36., 0.0792928) (37., 0.0781072) (38., 0.0769687) (39., 0.0758742) (40., 0.0748211) (41., 0.0738067) (42., 0.0728288) (43., 0.0718852) (44., 0.070974) (45., 0.0700933) (46., 0.0692414) (47., 0.0684169) (48., 0.0676183) (49., 0.0668443) (50., 0.0660936) (51., 0.0653651) (52., 0.0646578) (53., 0.0639705) (54., 0.0633024) (55., 0.0626526) (56., 0.0620203) (57., 0.0614048) (58., 0.0608052) (59., 0.0602209) (60., 0.0596513) (61., 0.0590957) (62., 0.0585537) (63., 0.0580246) (64., 0.0575079) (65., 0.0570032) (66., 0.0565101) (67., 0.056028) (68., 0.0555566) (69., 0.0550955) (70., 0.0546443) (71., 0.0542026) (72., 0.0537702) (73., 0.0533467) (74., 0.0529319) (75., 0.0525253) 
};

\addplot[only marks, mark=+, color=black] plot coordinates {
(10., 0.0434385) (11., 0.0472721) (12., 0.0523805) (13., 0.0304878) (14., 0.0343395) (15., 0.0388703) (16., 0.0196782) (17., 0.0231185) (18., 0.0269711) (19., 0.0311761) (20., 0.0356767) (21., 0.0164356) (22., 0.0199209) (23., 0.0236136) (24., 0.0274876) (25., 0.00999086) (26., 0.0130519) (27., 0.0162507) (28., 0.0195703) (29., 0.0229976) (30., 0.0265236) (31., 0.00920449) (32., 0.0120569) (33., 0.0149869) (34., 0.0179873) (35., 0.0210516) (36., 0.00500924) (37., 0.00753148) (38., 0.0101075) (39., 0.0127327) (40., 0.0154032) (41., 0.0181152) (42., 0.0208652) (43., 0.00512823) (44., 0.00742998) (45., 0.00976791) (46., 0.0121371) (47., 0.0145351) (48., 0.01696) (49., 0.00221507) (50., 0.00427015) (51., 0.00635187) (52., 0.00845638) (53., 0.0105822) (54., 0.0127278) (55., 0.014892) (56., 0.0170735) (57., 0.00267567) (58., 0.00455128) (59., 0.00644324) (60., 0.00835058) (61., 0.0102696) (62., 0.0122046) (63., 0.0141523) (64., 0.000549966) (65., 0.00224069) (66., 0.00394368) (67., 0.00565826) (68., 0.00738381) (69., 0.00911973) (70., 0.0108655) (71., 0.0126204) (72., 0.0143813) (73., 0.00112777) (74., 0.00267379) (75., 0.00422843) 
};

\addplot[color=black, thick, smooth] plot coordinates {
(10., 0.0487304) (11., 0.0443212) (12., 0.0406469) (13., 0.0375379) (14., 0.034873) (15., 0.0325634) (16., 0.0305426) (17., 0.0287594) (18., 0.0271744) (19., 0.0257563) (20., 0.0244799) (21., 0.0233251) (22., 0.0222753) (23., 0.0213168) (24., 0.0204382) (25., 0.0196298) (26., 0.0188837) (27., 0.0181928) (28., 0.0175512) (29., 0.0169539) (30., 0.0163964) (31., 0.0158749) (32., 0.015386) (33., 0.0149267) (34., 0.0144944) (35., 0.0140869) (36., 0.0137019) (37., 0.0133378) (38., 0.0129929) (39., 0.0126656) (40., 0.0123547) (41., 0.0120589) (42., 0.0117773) (43., 0.0115087) (44., 0.0112524) (45., 0.0110074) (46., 0.0107731) (47., 0.0105488) (48., 0.0103338) (49., 0.0101276) (50., 0.00992964) (51., 0.00973944) (52., 0.00955655) (53., 0.00938057) (54., 0.0092111) (55., 0.0090478) (56., 0.00889033) (57., 0.00873838) (58., 0.00859168) (59., 0.00844994) (60., 0.00831294) (61., 0.00818042) (62., 0.00805218) (63., 0.00792801) (64., 0.00780772) (65., 0.00769113) (66., 0.00757807) (67., 0.00746839) (68., 0.00736194) (69., 0.00725857) (70., 0.00715815) (71., 0.00706056) (72., 0.00696569) (73., 0.00687341) (74., 0.00678362) (75., 0.00669624) 
};
\end{groupplot}
\end{tikzpicture}
\vspace{-1em}
\caption{(left) The absolute error $| \hat{R}_s(\gamma) - \hat{R}_0(\gamma) - \hat{R}_1(\gamma) / \sqrt{s} |$ as function of $s$ for $a = 0.1$, $b = 1$, and $\eta = 2$, for a critically scaled system with $\gamma = 2$. Plotted also is the curve $e(s) = c_1 + c_2 / s^{c_3}$ with fit parameters $c_1 = 4.0 \cdot 10^{-5}$, $c_2 = 7.2 \cdot 10^{-2}$, and $c_3 = 1.1$ (continuous). (right) The top data points give the optimality gap $| \gamma_0 - \criticalpoint{\gamma}_s |$, and the bottom data points $| \gamma_{1,s} - \criticalpoint{\gamma}_s |$. The top fit is for $e(s) = c_1 + c_2 / \sqrt{s}$ with $c_1 = 2.3 \cdot 10^{-4}$ and $c_2 = 4.9 \cdot 10^{-1}$, the bottom $e(s) = c_1 + c_2 / s$ with $c_1 = -1.0 \cdot 10^{-5}$ and $c_2 = 1.3 \cdot 10^{-3}$.}
\label{fig:Absolute_error_in_revenue_function_with_two_terms}
\end{figure}

\subsection{Joint dimensioning and admission control}
\label{sec:Joint_staffing_and_admission_control}

We now introduce joint dimensioning and admission control, which has not been studied in the \gls{QED} literature. In \cite{sanders_optimal_2014}, it is proven that the admission control policy that maximizes the system's revenue rate has a threshold structure, and that for fixed $\gamma < \infty$ there exists an optimal threshold level $\criticalpoint{\eta}$. In \refSection{sec:Numerical_results}, we fixed $\eta < \infty$ and searched for $\criticalpoint{\gamma}$. Now consider their joint optimization, that is to find
\begin{equation}
(\criticalpoint{\gamma}, \criticalpoint{\eta} ) = \arg \max_{ \gamma \in \realNumbers, \, \eta \geq 0 } \{ \hat{R}_0(\gamma, \eta) \}.
\label{eqn:Joint_staffing_and_admission_control_problem}
\end{equation}

We now illustrate numerically that joint dimensioning and admission control provides important improvements compared to optimizing over $\gamma$ only as in \refSection{sec:Numerical_results}. \refTable{table:Optimal_threshold_hedge_pair} displays solution pairs $(\criticalpoint{\gamma},\criticalpoint{\eta})$ to the maximization problem in \eqref{eqn:Joint_staffing_and_admission_control_problem} for various ratios $r_1 = a/(a+d)$ and $r_2 = (a+d)/b$. Note that these two ratios are sufficient to describe all possible optimization problems, i.e.\ that occur for different $a$, $b$, and $d$, which can for example be seen by dividing the objective function in \refEquation{eqn:Asymptotic_limit_of_economics_optimizaton_problem} by $d$ and then noting that $(a+d)/d = r_2/(r_2 - r_1 r_2)$ and $b/d = 1/(r_2 - r_1 r_2)$. From \refTable{table:Optimal_threshold_hedge_pair}, we see that the optimization problem is well-posed, since nondegenerate optimal pairs exist. 

\refTable{table:Improvements_from_considering_joint_staffing_and_admission_control} contains the percentage improvements that can be achieved by solving the joint dimensioning and admission control problem, compared to the classical approach of finding $\criticalpoint{\gamma}_\infty = \arg \max_{\gamma \geq 0} \hat{R}_s(\gamma, \eta = \infty)$. We should note that this concerns the maximization of the second-order term in \refEquation{eqn:Economics_optimization_problem}, the leading order cannot be influenced through optimization. Note also that $\criticalpoint{\gamma}$ can become negative.

\begin{sidewaystable}[!hbtp]
\centering
\caption{The optimal threshold, hedge pair $(\gamma^{\mathrm{opt}},\eta^{\mathrm{opt}})$ for different ratios of $a/(a+d)$ and $(a+d)/b$.}
\scriptsize
\label{table:Optimal_threshold_hedge_pair}
\begin{tabular}{c|ccccccccc}
\toprule 
$a/(a+d)$ & \multicolumn{9}{c}{$(a+d)/b$} \\
& 1/5 & 1/4 & 1/3 & 1/2 & 1 & 2 & 3 & 4 & 5 \\
\midrule
0.1 & (1.9, 0.4) & (1.9, 0.5) & (1.9, 0.6) & (1.8, 0.9) & (1.6, 1.6) & (1.4, 2.9) & (1.2, 3.9) & (1.1, 4.7) & (1.1, 5.5) \\ 
0.2 & (1.5, 0.3) & (1.5, 0.4) & (1.5, 0.5) & (1.4, 0.7) & (1.3, 1.3) & (1.1, 2.3) & (1.0, 3.1) & (0.9, 3.8) & (0.8, 4.4) \\ 
0.3 & (1.2, 0.3) & (1.2, 0.3) & (1.1, 0.4) & (1.1, 0.6) & (1.0, 1.1) & (0.8, 2.0) & (0.7, 2.6) & (0.7, 3.2) & (0.6, 3.7) \\ 
0.4 & (0.9, 0.2) & (0.8, 0.3) & (0.8, 0.4) & (0.8, 0.5) & (0.7, 1.0) & (0.6, 1.7) & (0.6, 2.3) & (0.5, 2.8) & (0.5, 3.2) \\ 
0.5 & (0.5, 0.2) & (0.5, 0.2) & (0.5, 0.3) & (0.5, 0.5) & (0.5, 0.8) & (0.4, 1.5) & (0.4, 2.0) & (0.3, 2.4) & (0.3, 2.8) \\ 
0.6 & (0.2, 0.2) & (0.2, 0.2) & (0.2, 0.3) & (0.2, 0.4) & (0.2, 0.7) & (0.2, 1.2) & (0.2, 1.7) & (0.1, 2.0) & (0.1, 2.4) \\ 
0.7 & (-0.3, 0.1) & (-0.3, 0.2) & (-0.3, 0.2) & (-0.2, 0.3) & (-0.2, 0.6) & (-0.2, 1.0) & (-0.1, 1.4) & (-0.1, 1.7) & (-0.1, 2.0) \\ 
0.8 & (-0.9, 0.1) & (-0.9, 0.1) & (-0.9, 0.2) & (-0.8, 0.2) & (-0.7, 0.4) & (-0.6, 0.8) & (-0.5, 1.1) & (-0.5, 1.3) & (-0.5, 1.6) \\ 
0.9 & (-2.1, 0.1) & (-2.1, 0.1) & (-2.1, 0.1) & (-2.0, 0.2) & (-1.8, 0.3) & (-1.5, 0.5) & (-1.4, 0.7) & (-1.3, 0.9) & (-1.2, 1.0) \\ 
\bottomrule
\end{tabular}

\centering
\caption{Pairs $( \gamma^{\mathrm{opt}} / \gamma^{\mathrm{opt}}_\infty, 100 \cdot ( R^{\mathrm{opt}} - R_{\infty}^{\mathrm{opt}} ) / | R_{\infty}^{\mathrm{opt}} | )$ for ratios $a/(a+d)$ and $(a+d)/b$.}
\scriptsize
\label{table:Improvements_from_considering_joint_staffing_and_admission_control}
\begin{tabular}{c|ccccccccc}
\toprule 
$a/(a+d)$ & \multicolumn{9}{c}{$(a+d)/b$} \\
& 1/5 & 1/4 & 1/3 & 1/2 & 1 & 2 & 3 & 4 & 5 \\
\midrule
0.1 & (0.9, 7) & (0.9, 6) & (0.9, 5) & (0.9, 3) & (1.0, 1) & (1.0, 0) & (1.0, 0) & (1.0, 0) & (1.0, 0) \\ 
0.2 & (0.8, 13) & (0.8, 11) & (0.8, 9) & (0.8, 7) & (0.9, 4) & (0.9, 2) & (0.9, 1) & (1.0, 1) & (1.0, 1) \\ 
0.3 & (0.6, 18) & (0.7, 16) & (0.7, 14) & (0.7, 11) & (0.8, 7) & (0.8, 4) & (0.9, 3) & (0.9, 3) & (0.9, 2) \\ 
0.4 & (0.5, 23) & (0.5, 22) & (0.5, 19) & (0.6, 16) & (0.6, 11) & (0.7, 8) & (0.7, 7) & (0.7, 6) & (0.7, 5) \\ 
0.5 & (0.3, 29) & (0.3, 28) & (0.4, 25) & (0.4, 22) & (0.4, 17) & (0.5, 13) & (0.5, 11) & (0.5, 10) & (0.5, 9) \\ 
0.6 & (0.1, 36) & (0.1, 34) & (0.1, 32) & (0.1, 29) & (0.2, 23) & (0.2, 19) & (0.2, 17) & (0.2, 16) & (0.2, 15) \\ 
0.7 & (-0.2, 44) & (-0.2, 42) & (-0.2, 40) & (-0.2, 37) & (-0.2, 31) & (-0.2, 27) & (-0.2, 25) & (-0.2, 23) & (-0.2, 22) \\ 
0.8 & (-0.6, 54) & (-0.6, 52) & (-0.7, 50) & (-0.7, 47) & (-0.8, 42) & (-0.9, 38) & (-0.9, 36) & (-0.9, 34) & (-1.0, 33) \\ 
0.9 & (-1.5, 67) & (-1.5, 65) & (-1.6, 64) & (-1.8, 62) & (-2.0, 58) & (-2.3, 54) & (-2.5, 52) & (-2.6, 51) & (-2.6, 50) \\ 
\bottomrule
\end{tabular}
\end{sidewaystable}

\subsection{Refined dimensioning}

The \gls{QED} refinements can also be used to derive refined dimensioning levels. The idea is that a higher-order asymptotic solution $\gamma_{1,s}$ can be expressed as a function of the lower-order asymptotic solution $\gamma_{0}$. To see this, consider the following representation of $\gamma_{1,s}$ in the context of dimensioning under a delay constraint as discussed in \refSection{sec:Staffing_under_a_delay_constraint}.

\begin{theorem}
\label{thm:Refined_staffing}
For sufficiently large $s$, the first-order solution is given as
$
\gamma_{1,s} = \gamma_{0} + \bar{\gamma}_{0}
$,
where 
\begin{equation}
\bar{\gamma}_0
= \sum_{n=1}^\infty \frac{(-1)^n}{(n-1)!} \Bigl( \frac{\d{}}{\d{\gamma}} \Bigr)^{n-1} \Bigl[ ( A'(\gamma) + E'(\gamma) ) \Bigl( \frac{\gamma}{A(\gamma)} \Bigr)^{n+1} (E(\gamma))^n \Bigr]_{\gamma = 0},
\label{eqn:Refinement_in_delay_probability_case}
\end{equation} 
and for small $\gamma$ the auxiliary functions are defined as
$A(\gamma) 
= D_0( \gamma + D_0^{\gets}(\varepsilon) ) - \varepsilon$,
and
$
E(\gamma) 
= D_1( \gamma + D_0^{\gets}(\varepsilon) ) / \sqrt{s}
$.
\end{theorem}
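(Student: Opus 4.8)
First I would reduce the defining equation to a root-finding problem. By \refSection{sec:Staffing_under_a_delay_constraint}, $\gamma_{1,s}$ solves $D_0(\gamma) + D_1(\gamma)/\sqrt{s} = \varepsilon$, and $\gamma_0 = D_0^{\gets}(\varepsilon)$. Writing $\gamma_{1,s} = \gamma_0 + \bar{\gamma}_0$ and substituting, the definitions of $A$ and $E$ turn this into
\begin{equation}
A(\bar{\gamma}_0) + E(\bar{\gamma}_0) = 0,
\end{equation}
so $\bar{\gamma}_0$ is the root near the origin of $B(\gamma) := A(\gamma) + E(\gamma)$. Note that $A(0) = D_0(\gamma_0) - \varepsilon = 0$ while $A'(0) = {D_0}'(\gamma_0) \neq 0$ by strict monotonicity of $D_0$, so $A$ has a simple zero at $\gamma = 0$, and $E$ is $\bigO{1/\sqrt{s}}$ uniformly on a fixed disc about $0$.

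Next I would set up a contour representation of this root. Since the explicit expressions for $D_0$ and $D_1$ extend analytically to a complex neighborhood of $\gamma_0$, the functions $A$ and $E$ are analytic near $0$. Fix a small circle $C = \{ |\gamma| = \delta \}$, on which $|A(\gamma)| \geq c\delta$ for some $c > 0$. For $s$ so large that $\sup_{C} |E| < \inf_{C} |A|$, Rouch\'e's theorem shows that $B = A + E$ has exactly one zero inside $C$, namely a simple zero at $\bar{\gamma}_0$, and $B$ has no poles there. The argument principle then yields
\begin{equation}
\bar{\gamma}_0 = \frac{1}{2\pi \im} \oint_{C} \gamma \frac{{B}'(\gamma)}{B(\gamma)} \d{\gamma}.
\end{equation}

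Then I would expand the integrand in powers of $E/A$. On $C$ we have $|E/A| < 1$, so
\begin{equation}
\frac{{B}'(\gamma)}{B(\gamma)} = \frac{{B}'(\gamma)}{A(\gamma)} \frac{1}{1 + E(\gamma)/A(\gamma)} = \sum_{n=0}^\infty (-1)^n \frac{{B}'(\gamma) E(\gamma)^n}{A(\gamma)^{n+1}}
\end{equation}
uniformly on $C$, which licenses termwise integration. The $n = 0$ term integrates to zero, because $\gamma {B}'(\gamma)/A(\gamma)$ is analytic inside $C$ (the simple zero of $A$ at the origin is cancelled by the factor $\gamma$). For $n \geq 1$, writing ${B}' = {A}' + {E}'$ and extracting the residue at $\gamma = 0$ via $A(\gamma)^{n+1} = \gamma^{n+1} (A(\gamma)/\gamma)^{n+1}$,
\begin{equation}
\frac{1}{2\pi \im} \oint_{C} \frac{\gamma {B}'(\gamma) E(\gamma)^n}{A(\gamma)^{n+1}} \d{\gamma}
= \frac{1}{(n-1)!} \Bigl( \frac{\d{}}{\d{\gamma}} \Bigr)^{n-1} \Bigl[ ( {A}'(\gamma) + {E}'(\gamma) ) \Bigl( \frac{\gamma}{A(\gamma)} \Bigr)^{n+1} E(\gamma)^n \Bigr]_{\gamma = 0},
\end{equation}
since $(\gamma/A(\gamma))^{n+1}$ is analytic and nonzero at $0$. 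Summing over $n \geq 1$ reproduces \refEquation{eqn:Refinement_in_delay_probability_case}.

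The main obstacle is the complex-analytic justification rather than the algebra: one must verify that $A$ and $E$ admit analytic continuations to a fixed complex disc about $0$, that Rouch\'e's theorem indeed isolates a single simple zero $\bar{\gamma}_0$ there, and that the geometric series converges uniformly on $C$ so that summation and contour integration may be interchanged. Each of these holds precisely for $s$ sufficiently large, matching the hypothesis of the theorem, and the same smallness $E = \bigO{1/\sqrt{s}}$ also guarantees convergence of the resulting series. As a cross-check one may match the first two terms against a direct perturbative solution of $A(\bar{\gamma}_0) + E(\bar{\gamma}_0) = 0$; an equivalent, more symmetric Lagrange--B\"urmann form $\bar{\gamma}_0 = \sum_{n \geq 1} ((-1)^n/n!) ( \d{}/\d{\gamma} )^{n-1} [ E(\gamma)^n (\gamma/A(\gamma))^n ]_{\gamma = 0}$ can also be derived and shown to agree.
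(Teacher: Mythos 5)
Your proposal is correct and follows essentially the same route as the paper's own proof: both reduce the problem to the root equation $A(\bar{\gamma}_0) + E(\bar{\gamma}_0) = 0$, isolate that root with Rouch\'e's theorem, represent it by the argument-principle contour integral, expand the integrand as a geometric series in $E/A$, observe that the $n=0$ term vanishes by analyticity of $\gamma/A(\gamma)$, and evaluate the $n \geq 1$ terms via Cauchy's derivative formula --- exactly the ``mimicked Lagrange inversion'' argument of the paper. If anything, your version states the supporting facts more cleanly than the paper's text, which contains a few typographical slips (e.g.\ the stray $G'(0)$ and $F(\gamma)$ where $E'(0)$ and $A(\gamma)$ are meant).
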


\begin{proof}
We mimick the standard proof of Lagrange's inversion theorem. Both $A$ and $E$ are analytic in the neighborhood of $\gamma = 0$ by analyticity of $B_s(\gamma)$ around $\gamma = 0$ and analyticity of $F_1(\gamma)$ in $\mathrm{Re} \{ \gamma \} > \gamma^{\min}$, with $\gamma^{\min} < 0$ by assumption. Also, $A(0) = 0$, $A'(0) \neq 0$. 

By taking $s$ sufficiently large, say $s \geq s_0$, we can arrange that there is an $r > 0$ such that $A'(0) + G'(0) \neq 0$ and $| E(\gamma) / A(\gamma) \allowbreak \leq 1/2$ for $|\gamma| = r$ and $s \geq s_0$, while $\gamma = 0$ is the only zero of $F(\gamma)$ in $|\gamma| \leq r$. By Rouch\'{e}'s theorem, for any $s \geq s_0$ there is a unique $\bar{\gamma}_0$ in $|\gamma| \leq r$ such that $A(\bar{\gamma}_0) + E(\bar{\gamma}_0) = 0$. Thus the solution $\gamma_{1,s}$ near $\gamma_0$ of the equation $D_0(\gamma) + D_1(\gamma) / \sqrt{s} = \varepsilon$ is given as $\gamma_{1,s} = \gamma_0 + \bar{\gamma}_0$. By Cauchy's theorem, there is for $\bar{\gamma}_0$ the integral representation 
\begin{equation}
\bar{\gamma}_0 = \frac{1}{2 \pi \im} \int_{|\gamma|=r} \frac{\gamma ( A'(\gamma) + E'(\gamma) )}{A(\gamma) + E(\gamma)} \d{\gamma}.
\end{equation}
Since we have $0 \neq |A(\gamma)| \geq 2|E(\gamma)|$ on $|\gamma| = r$, it follows that
\begin{equation}
\bar{\gamma}_0 = \sum_{n=0}^\infty \frac{(-1)^n}{2 \pi \im} \int_{|\gamma|=r} \gamma( A'(\gamma) + E'(\gamma) ) \frac{ (E(\gamma))^n }{ (A(\gamma))^{n+1} } \d{\gamma}.
\end{equation}
Due to analyticity of $\gamma / A(\gamma)$, the term with $n = 0$ vanishes. For $n = 1, 2, \ldots$, we furthermore have that
\begin{align}
&
\frac{1}{2 \pi \im} \int_{|\gamma|=r} \gamma( A'(\gamma) + E'(\gamma) ) \frac{ (E(\gamma))^n }{ (A(\gamma))^{n+1} } \d{\gamma}
\\ &
= \frac{1}{2 \pi \im} \int_{|\gamma|=r} \frac{1}{\gamma^n} ( A'(\gamma) + E'(\gamma) ) \Bigl( \frac{\gamma}{A(\gamma)} \Bigr)^{n+1} (E(\gamma))^n \d{\gamma}
\nonumber \\ &
= \frac{1}{(n-1)!} \Bigl( \frac{\d{}}{\d{\gamma}} \Bigr)^{n-1} \Bigl[ ( A'(\gamma) + E'(\gamma) ) \Bigl( \frac{\gamma}{A(\gamma)} \Bigr)^{n+1} (E(\gamma))^n \Bigr]_{\gamma = 0}.
\nonumber 
\end{align}
This is the result in \refEquation{eqn:Refinement_in_delay_probability_case}, and concludes the proof.
\end{proof}

When using the first two terms in \refEquation{eqn:Refinement_in_delay_probability_case}, we get for $\bar{\gamma}_0$ the approximation
\begin{equation}
- \frac{E}{A'} + \frac{EE'}{(A')^2} - \frac{A''E^2}{2(A')^3} - \frac{3A''E'E^2}{(A')^4} + \frac{E''E^2}{(A')^3} + \frac{2E(E')^2}{(A')^3},
\end{equation}
with all functions evaluated at $\gamma = 0$. The first term gives an $s^{-1/2}$-correction of $\gamma_0$, the second term gives an $s^{-1}$-contribution, and other terms give contributions of $\bigO{s^{-3/2}}$ or smaller. Summarizing, we thus have that
\begin{equation}
\gamma_{1,s} 
= \gamma_0 + \bar{\gamma}_0 
= \gamma_0 - \frac{1}{\sqrt{s}} \frac{D_1(\gamma_0)}{{D_0}'(\gamma_0)} + \bigObig{ \frac{1}{s} }. 
\end{equation}

\bibliographystyle{abbrv}
\bibliography{G:/SOR/Bibliography/Bibliography}

\section*{Acknowledgments}

This research was financially supported by The Netherlands Organization for Scientific Research (NWO) in the framework of the TOP-GO program and by an ERC Starting Grant.

\appendix

\section{Higher-order terms in asymptotic expansions}
\label{sec:Expansions}

We now provide closed-form expressions for the asymptotic expansion in \refEquation{eqn:Expansion_for_Rs}. We drop the dependence on $\gamma$ for notational convenience, and prove the following result.

\begin{theorem}
\label{thm:Expansions}
As $s \to \infty$,
$
( R_s - n_s ) / q_s 
= R_0 + R_1 / \sqrt{s} + \bigO{ 1 / s }
$,
where
\begin{equation}
R_0 
= \frac{ W_0^{\mathrm{L}} + W_0^{\mathrm{R}} }{ B_0 + F_0 },
\enskip
R_1 
= \frac{ W_1^{\mathrm{L}} + W_1^{\mathrm{R}} }{ B_0 + F_0 } - \frac{ ( W_0^{\mathrm{L}} + W_0^{\mathrm{R}} ) ( B_1 + F_1 ) }{ ( B_0 + F_0 )^2 },
\end{equation}
and
\begin{gather}
W_0^{\mathrm{L}} 
= \int_{-\infty}^0 r(x) \e{-\frac{1}{2} x^2 - \gamma x } \d{x}, 
\nonumber \\ 
W_1^{\mathrm{L}} 
= \tfrac{1}{2} \int_{-\infty}^0 ( \tfrac{1}{3} x^3 - ( 1 + \gamma^2 ) x ) r(x) \e{-\frac{1}{2} x^2 - \gamma x } \d{x} + r(0),
\nonumber \\
W_0^{\mathrm{R}} 
= \int_0^\infty r(x) f(x) \e{-\gamma x} \d{x},
\nonumber \\
W_1^{\mathrm{R}} 
= - \tfrac{1}{2} \gamma^2 \int_0^\infty x r(x) f(x) \e{-\gamma x} \d{x} - \tfrac{1}{2} r(0) f(0),
\label{eqn:Explicit_formulae_for_expansion_coefficients_of_WsL_and_WsR}
\end{gather}
as well as
\begin{gather}
B_0 
= \frac{\Phi(\gamma)}{\phi(\gamma)},
\enskip 
B_1
= \tfrac{1}{3} \Bigl( 2 + \gamma^2 + \gamma^3 \frac{\Phi(\gamma)}{\phi(\gamma)} \Bigr),
\label{eqn:Explicit_formulae_for_expansion_coefficients_of_Bs_and_Fs}
\\
F_0 
= \int_0^\infty f(x) \e{-\gamma x} \d{x},
\enskip
F_1 
= - \tfrac{1}{2} \gamma^2 \int_0^\infty x f(x) \e{-\gamma x} \d{x} - \tfrac{1}{2} f(0).
\nonumber
\end{gather}
\end{theorem}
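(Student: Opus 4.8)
The plan is to write $(R_s-n_s)/q_s$ as a single ratio whose numerator and denominator are each expanded to second order in $1/\sqrt{s}$. Normalizing the weights in \refEquation{eqn:Equilibrium_distribution} relative to state $s$ (the common factors cancel in the ratio), splitting each sum at $k=s$, and using condition (iii) to replace $(r_s(k)-n_s)/q_s$ by its scaled limit $r((k-s)/\sqrt{s})$, I obtain
\begin{equation}
\frac{R_s-n_s}{q_s}
= \frac{\displaystyle\sum_{k\le s} r\Bigl(\tfrac{k-s}{\sqrt{s}}\Bigr)\tfrac{s!}{k!}(s\rho)^{k-s} + \sum_{k>s} r\Bigl(\tfrac{k-s}{\sqrt{s}}\Bigr)\rho^{k-s}\prod_{i=0}^{k-s-1}p_s(i)}{\displaystyle\sum_{k\le s}\tfrac{s!}{k!}(s\rho)^{k-s} + \sum_{k>s}\rho^{k-s}\prod_{i=0}^{k-s-1}p_s(i)},
\end{equation}
with $\rho=1-\gamma/\sqrt{s}$. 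This isolates four sums: the left ($k\le s$) and right ($k>s$) parts of the numerator, which will produce $W^{\mathrm{L}}$ and $W^{\mathrm{R}}$, and of the denominator, which will produce $B$ and $F$.

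I would then expand each sum via the substitution $x=(k-s)/\sqrt{s}$, so that successive summands are spaced by $\Delta x=1/\sqrt{s}$. For the left sums ($x\le0$), Stirling's formula applied to $s!/k!$ together with the expansion of $\rho^{k-s}$ gives a pointwise expansion of the summand of the form $\mathrm{e}^{-\frac12 x^2-\gamma x}\bigl(1+g_1(x)/\sqrt{s}+\bigO{1/s}\bigr)$ with $g_1(x)=\tfrac16 x^3-\tfrac12(1+\gamma^2)x$; this interior factor is exactly what produces the cubic and linear terms in $W_1^{\mathrm{L}}$ and in $B_1$. For the right sums ($x\ge0$), condition (ii) replaces $\prod_{i=0}^{k-s-1}p_s(i)$ by $f((k-s)/\sqrt{s})$, while $\rho^{k-s}=\mathrm{e}^{-\gamma x}(1-\tfrac12\gamma^2 x/\sqrt{s}+\cdots)$ supplies the $-\tfrac12\gamma^2\int x\,r f\,\mathrm{e}^{-\gamma x}$ and $-\tfrac12\gamma^2\int x f\,\mathrm{e}^{-\gamma x}$ parts of $W_1^{\mathrm{R}}$ and $F_1$. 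Applying the Euler--Maclaurin formula then turns each sum into $\sqrt{s}$ times a leading integral plus an $\bigO{1}$ correction consisting of the interior $g_1$-integral and the boundary terms at $x=0$; matching orders yields all eight coefficients in \refEquation{eqn:Explicit_formulae_for_expansion_coefficients_of_WsL_and_WsR}--\refEquation{eqn:Explicit_formulae_for_expansion_coefficients_of_Bs_and_Fs}. The universal coefficients $B_0,B_1$ coincide with the refined Erlang-B expansion and may be quoted from \cite{janssen_scaled_2013,sanders_optimal_2014}, whose lemmas also justify the sum-to-integral passage and the uniform $\bigO{1/s}$ remainders using the tail and integrability built into conditions (ii)--(iii).

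Finally I would recombine. Writing the numerator as $\sqrt{s}(W_0^{\mathrm{L}}+W_0^{\mathrm{R}})+(W_1^{\mathrm{L}}+W_1^{\mathrm{R}})+\bigO{1/\sqrt{s}}$ and the denominator as $\sqrt{s}(B_0+F_0)+(B_1+F_1)+\bigO{1/\sqrt{s}}$, I factor out $\sqrt{s}$ and expand the ratio with $1/(1+\varepsilon)=1-\varepsilon+\bigO{\varepsilon^2}$. The leading term gives $R_0=(W_0^{\mathrm{L}}+W_0^{\mathrm{R}})/(B_0+F_0)$, and the $1/\sqrt{s}$ term gives exactly $R_1=(W_1^{\mathrm{L}}+W_1^{\mathrm{R}})/(B_0+F_0)-(W_0^{\mathrm{L}}+W_0^{\mathrm{R}})(B_1+F_1)/(B_0+F_0)^2$, as stated.

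I expect the main obstacle to be the second-order bookkeeping, and in particular the boundary terms at $x=0$ (state $k=s$). The interior Stirling corrections, the expansions of $\rho^{k-s}$ and of the admission product, and the Euler--Maclaurin endpoint contributions all enter at order $1/\sqrt{s}$, and state $s$ is shared between the two regions; a careful, consistent assignment of its half-weights is what produces the delicate constants $+r(0)$ in $W_1^{\mathrm{L}}$, $-\tfrac12 r(0)f(0)$ in $W_1^{\mathrm{R}}$, and $-\tfrac12 f(0)$ in $F_1$. Establishing that all neglected contributions are genuinely uniform $\bigO{1/s}$ on $\Gamma$ is the remaining technical point, handled by dominating the Gaussian and exponential tails.
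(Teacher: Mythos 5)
Your proof skeleton is the same as the paper's: after normalizing by $(s\rho)^s/s!$, your ratio is exactly the paper's \refEquation{eqn:Equation_for_Rs_divided_into_asymptotic_terms}, $(R_s-n_s)/q_s = (W_s^{\mathrm{L}}+W_s^{\mathrm{R}})/(B_s^{-1}+F_s)$, and your final recombination via $1/(1+x)=1-x+\bigO{x^2}$ is the paper's closing step verbatim. The genuine difference is the middle step: the paper does not derive the four constituent expansions at all, but quotes them from \cite{janssen_scaled_2013,sanders_optimal_2014} (which rest on Jagerman's expansions of the Erlang B formula), whereas you propose to rederive them by Stirling plus Euler--Maclaurin. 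Your interior bookkeeping is correct: the pointwise expansion of $s!(s\rho)^{k-s}/k!$ with $g_1(x)=\tfrac16 x^3-\tfrac12(1+\gamma^2)x$, and $\rho^{k-s}=\e{-\gamma x}(1-\tfrac12\gamma^2 x/\sqrt{s}+\cdots)$, produce precisely the integrands appearing in \refEquation{eqn:Explicit_formulae_for_expansion_coefficients_of_WsL_and_WsR} and \refEquation{eqn:Explicit_formulae_for_expansion_coefficients_of_Bs_and_Fs}.

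The gap sits in the endpoint constants, the very place you flag as delicate and then dispatch by assertion. First, your stated mechanism is wrong: state $k=s$ is not ``shared between the two regions''; in this decomposition it belongs exclusively to the left sums $W_s^{\mathrm{L}}$ and $B_s^{-1}$, while $W_s^{\mathrm{R}}$ and $F_s$ start at $k=s+1$. Second, Euler--Maclaurin attaches the half weight $\tfrac12 h(0)$ to an included endpoint, so for continuous $r$ your computation yields the constant $\tfrac12 r(0)$ in $W_1^{\mathrm{L}}$, not $r(0)$. The half weight is moreover the only value consistent with $B_1$: taking $r\equiv 1$ makes $W_s^{\mathrm{L}}\equiv B_s^{-1}$ identically, and it is the interior integral plus $\tfrac12$ that reproduces $B_1=\tfrac13\bigl(2+\gamma^2+\gamma^3\Phi(\gamma)/\phi(\gamma)\bigr)$ (at $\gamma=0$ this is the classical $B(s,s)^{-1}=\sqrt{\pi s/2}+\tfrac23+\smallO{1}$). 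The full weight $r(0)$ emerges only for profiles that vanish on $x<0$ and jump at $0$ --- e.g.\ the delay probability $r(x)=\indicator{x\geq 0}$, for which $W_s^{\mathrm{L}}$ reduces to the single term $r(0)$ --- and in general the endpoint constant is $r(0)-\tfrac12 r(0^-)$, i.e.\ it depends on the one-sided behavior of $r$ at $0$, which neither your sketch nor a blanket ``$+\,r(0)$'' tracks. So, executed honestly, your derivation produces a constant that disagrees with the one you claim to recover, and it reveals that the coefficient is sensitive to whether $r(0^-)=0$. To close the proof you must either carry out this endpoint analysis for the admissible class of revenue profiles (distinguishing $r(0)$ from $r(0^-)$), or do what the paper does and import the four expansions, boundary constants included, from \cite{janssen_scaled_2013,sanders_optimal_2014}.
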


\begin{proof}
Note after substituting \refEquation{eqn:Equilibrium_distribution} into $R_s  = \sum_{k=0}^\infty r_s(k) \pi_s(k)$, that asymptotically
\begin{equation}
\frac{R_s - n_s}{q_s}
= \frac{ \sum_{k=0}^s r\bigl( \frac{k-s}{\sqrt{s}} \bigr) \frac{(s\rho)^k}{k!} + \frac{(s\rho)^s}{s!} \sum_{k=s+1}^\infty r\bigl( \frac{k-s}{\sqrt{s}} \bigr) \rho^{k-s} f\bigl( \frac{k-s}{\sqrt{s}} \bigr) }{ \sum_{k=0}^s \frac{(s\rho)^k}{k!} + \frac{(s\rho)^s}{s!} \sum_{k=s+1}^\infty \rho^{k-s} f\bigl( \frac{k-s}{\sqrt{s}} \bigr) }.
\end{equation}
Dividing by the factor $(s\rho)^s / s!$, we obtain the form
\begin{equation}
\frac{R_s - n_s}{q_s}
= \frac{ W_s^{\mathrm{L}} + W_s^{\mathrm{R}} }{ B_s^{-1} + F_s },
\label{eqn:Equation_for_Rs_divided_into_asymptotic_terms}
\end{equation}
where we have introduced notation for the Erlang B formula,
$
B_s(\rho) = ( (s\rho)^s / s! ) / \allowbreak ( \sum_{k=0}^s (s\rho)^k / k! )
$,
and we have defined 
$
F_s
= \sum_{n=0}^\infty \rho^{n+1} f( (n+1) / \sqrt{s} )
$, 
$
W_s^{\mathrm{L}}
= \sum_{k=0}^s r( (k-s) / \sqrt{s} ) ( s! (s\rho)^{k-s} ) / k!
$,
and
$
W_s^{\mathrm{R}}
= \sum_{n=0}^\infty r( (n+1) / \sqrt{s} ) \rho^{n+1} f( (n+1) / \sqrt{s} )
$.
In \cite{janssen_scaled_2013,sanders_optimal_2014}, it is proven using Jagerman's asymptotic expansions \cite{jagerman_properties_1974} for Erlang B's formula that asymptotically,
\begin{gather}
W_s^{\mathrm{L}} 
= \sqrt{s} W_0^{\mathrm{L}} + W_1^{\mathrm{L}} + \bigObig{ \frac{1}{\sqrt{s}} },
\enskip
W_s^{\mathrm{R}} 
= \sqrt{s} W_0^{\mathrm{R}} + W_1^{\mathrm{R}} + \bigObig{ \frac{1}{\sqrt{s}} },
\nonumber \\
B_s^{-1} 
= \sqrt{s} B_0 + B_1 + \bigObig{ \frac{1}{\sqrt{s}} },
\enskip
F_s 
= \sqrt{s} F_0 + F_1 + \bigObig{ \frac{1}{\sqrt{s}} },
\nonumber
\end{gather}
with the coefficients as given in \refEquation{eqn:Explicit_formulae_for_expansion_coefficients_of_WsL_and_WsR} and \refEquation{eqn:Explicit_formulae_for_expansion_coefficients_of_Bs_and_Fs}. After substituting these asymptotic expansions into \refEquation{eqn:Equation_for_Rs_divided_into_asymptotic_terms}, we obtain
\begin{equation}
\frac{R_s - n_s}{q_s}
= \frac{ W_0^{\mathrm{L}} + W_0^{\mathrm{R}} }{ B_0 + F_0 } \cdot \frac{ 1 + \frac{1}{\sqrt{s}} ( W_1^{\mathrm{L}} + W_1^{\mathrm{R}} ) / ( W_0^{\mathrm{L}} + W_0^{\mathrm{R}} ) + \bigO{ \frac{1}{s} } }{ 1 + \frac{1}{\sqrt{s}} ( B_1 + F_1  ) / ( B_0 + F_0 ) + \bigO{ \frac{1}{s} } }.
\nonumber
\end{equation}
By then utilizing the Taylor expansion $1/(1+x) = 1 - x + \bigO{x^2}$, we obtain the result.
\end{proof}

\paragraph{Delay probability}

The delay probability $D_s = \sum_{k=s}^\infty \pi_s(k)$ can be represented by $r_s(k) = \indicator{ k \geq s }$, recall \refEquation{eqn:Weight_function_Rs}. This corresponds asymptotically to $r(x) = \indicator{ x \geq 0 }$. introduce for convenience $\mathcal{L} = \int_0^\infty f(x) \e{-\gamma x} \d{x}$. Then,
$
W_0^{\mathrm{L}} 
= W_1^{\mathrm{L}} 
= 0$, 
$
W_0^{\mathrm{R}} 
= \mathcal{L}
$,
$
W_1^{\mathrm{R}} 
= \tfrac{1}{2} \gamma^2 \mathcal{L}'
$,
$
F_0 
= \mathcal{L}
$,
and
$
F_1 
= \tfrac{1}{2} \gamma^2 \mathcal{L}' - \tfrac{1}{2}
$.
It follows that 
$
D_0 
= \mathcal{L} / ( \Phi(\gamma) / \phi(\gamma) + \mathcal{L} )
$
and
\begin{equation}
D_1 
= \frac{ \tfrac{1}{2} \gamma^2 \mathcal{L}' }{ \frac{\Phi(\gamma)}{\phi(\gamma)} + \mathcal{L} } - \frac{ \mathcal{L} ( \tfrac{1}{3} \bigl( 2 + \gamma^2 + \gamma^3 \frac{\Phi(\gamma)}{\phi(\gamma)} \bigr) + \tfrac{1}{2} \gamma^2 \mathcal{L}' - \tfrac{1}{2} ) }{ ( \frac{\Phi(\gamma)}{\phi(\gamma)} + \mathcal{L} )^2 }.
\label{eqn:Expansions_for_the_delay_probability}
\end{equation}

\paragraph{Queue length}

The mean queue length $Q_s = \sum_{k=s}^\infty (k-s) \pi_s(k)$ can be represented by $r_s(k) = (k-s) \indicator{ k \geq s }$. Scaling so that $Q_s / \sqrt{s} =  \sum_{k=s}^\infty ( (k-s) / \sqrt{s} ) \pi_s(k)$, we see that the revenue structure can asymptotically be related to the revenue profile $r(x) = x \indicator{ x \geq 0 }$. Therefore,
$
W_0^{\mathrm{L}} 
= W_1^{\mathrm{L}} 
= 0
$, 
$
W_0^{\mathrm{R}} 
= -\mathcal{L}'
$,
$
W_1^{\mathrm{R}} 
= - \tfrac{1}{2} \gamma^2 \mathcal{L}''
$,
$
F_0 
= \mathcal{L}
$,
and
$
F_1 
= \tfrac{1}{2} \gamma^2 \mathcal{L}' - \tfrac{1}{2}
$.
Thus $Q_s / \sqrt{s} = Q_0 + Q_1 / \sqrt{s} + \bigO{ 1/s }$ with
$
Q_0 
= - \mathcal{L}' / ( \Phi(\gamma) / \phi(\gamma) + \mathcal{L} )
$
and
\begin{equation}
Q_1 
= - \frac{ \tfrac{1}{2} \gamma^2 \mathcal{L}' }{ \frac{\Phi(\gamma)}{\phi(\gamma)} + \mathcal{L} } + \frac{ \mathcal{L}' ( \tfrac{1}{3} \bigl( 2 + \gamma^2 + \gamma^3 \frac{\Phi(\gamma)}{\phi(\gamma)} \bigr) + \tfrac{1}{2} \gamma^2 \mathcal{L}' - \tfrac{1}{2} ) }{ ( \frac{\Phi(\gamma)}{\phi(\gamma)} + \mathcal{L} )^2 }.
\label{eqn:Expansions_for_the_queue_size}
\end{equation}

\end{document}